\newcommand{\caln}{\mathcal{N}}
\newcommand{\calg}{\mathcal{G}}
\newcommand{\calh}{\mathcal{H}}
\newcommand{\calgSW}{{}_{SW}\calg}
\newcommand{\calgNE}{\calg^{N\!E}}
\newtheorem{thm}{Theorem}[section]
\newtheorem{prop}[thm]{Proposition}
\newtheorem{cor}[thm]{Corollary}
\newtheorem{conjecture}[thm]{Conjecture}
\theoremstyle{defn}
\newtheorem{definition}[thm]{Definition}
\theoremstyle{remark}
\newtheorem{example}[thm]{Example}
\newtheorem{remark}[thm]{Remark}
\newtheorem{thmIntro}{\bf{Theorem}}
\newtheorem{corIntro}{\bf{Corollary}}
\numberwithin{equation}{section}
\newcommand{\cfq}{[a_1,a_2,\ldots,a_n]}
\newcommand{\cfa}{[a_1,a_2,\ldots,a_n]}
\newcommand{\caf}{[a_n\ldots,a_2,a_1]}
\newcommand{\cfb}{[b_1,b_2,\ldots,b_n]}
\newcommand{\cfg}{\calg [a_1,a_2,\ldots,a_n]}
  \def\sgn{\operatorname{sgn}}
\newcommand{\ze}{\epsilon}
\newcommand{\zg}{\gamma}
\newcommand{\band}{\calg^\circ}
\begin{document}
\title{Snake graphs and continued fractions}
\author{\.{I}lke  \c{C}anak\c{c}{\i}}
\address{School of Mathematics, Statistics and Physics, 
Newcastle University,
Newcastle Upon Tyne NE1 7RU, 
United Kingdom}
\email{{ilke.canakci@ncl.ac.uk}}
\author{Ralf Schiffler}\thanks{The first author was supported by EPSRC grants EP/N005457/1 and EP/P016014/1 and the Durham University, and the second author was supported by the NSF grants  DMS-1254567 and DMS-1800860,  and by the University of Connecticut.}
\address{Department of Mathematics, University of Connecticut, 
Storrs, CT 06269-1009, USA}
\email{schiffler@math.uconn.edu}




%
%
%
\begin{abstract}
This paper is a sequel to our previous work in which we found a combinatorial realization of continued fractions as quotients of the number of perfect matchings of snake graphs. We show how this realization reflects the convergents of the continued fractions as well as the Euclidean division algorithm. We apply our findings to establish results  on sums of squares, palindromic continued fractions, Markov numbers and other statements in elementary number theory.
\end{abstract}

 \maketitle


\tableofcontents


\section{Introduction}
Snake graphs are planar graphs that appeared first in the theory of cluster algebras. Cluster algebras are subalgebras of a field of rational functions generated by {\em cluster variables} \cite{FZ}.  
A special type of cluster algebras are those associated to marked surfaces, see \cite{FST,FT}, which have been studied by many people, see for example \cite{BZh,FG1,GSV, FeShTu,QZ}.
 For these cluster algebras it was shown in \cite{MS,MSW} that for every cluster variable 
there is a snake graph 
 such that the cluster variable is given as a sum over all perfect matchings of the snake graph,  
where each term in this sum is a Laurent monomial in  two types of variables $x_1,x_2,\ldots,x_N$ and $y_1,y_2,\ldots,y_N$.
In \cite{MSW2}, this formula was used to construct canonical bases for the cluster algebra using snake graphs and also band graphs, which are obtained from snake graphs by identifying two edges. One special case was later provided in \cite{CLS}. These results were generalized to orbifolds in \cite{CT,FeTu}. 

In our previous work, \cite{CS,CS2,CS3}, we studied snake graphs from an abstract point of view, and constructed a ring of snake graphs and band graphs which reflects the  relations between the elements of  cluster algebras of surface type in terms of bijections between sets of perfect matchings of unions of snake and band graphs. 

In our most recent paper \cite{CS4}, we established a bijection between continued fractions $\cfa$ and snake graphs $\calg\cfa$, such that the number of perfect matchings of the snake graph equals the numerator of the continued fraction. Moreover, we showed that this equation of natural numbers can be lifted to the cluster algebra by expressing the cluster variables as Laurent polynomials in $x_1,x_2,\ldots,x_N$. In this formula the $y$-variables were set to 1, meaning that the cluster algebra has trivial coefficients. In \cite{R}, this formula was generalized to include the $y$-variables.

Thus we have a formula that writes a cluster variable $u$, which is a Laurent polynomial in variables  $x_1,x_2,\ldots,x_N$,$y_1,y_2,\ldots,y_N$, as the numerator  of a continued fraction of Laurent polynomials $L_1,L_2,\ldots,L_n$ such that, when we specialize all variables $x_i=y_i=1$, we obtain 
\[m(\calg)= \textup{ numerator of }\cfa,\]
where $m(\calg)$ is the number of perfect matchings of the snake graph $\calg=\calg\cfa$ and $a_i$ is the specialization of $L_i$.

A different specialization has been studied in \cite{LS6}, setting $x_i=1, y_2=y_3=\cdots=y_N=-t^{-1}$, and $y_1=t^{-2}$. Curiously, this specialization computes the Jones polynomial of the 2-bridge link associated to the continued fraction.

\smallskip

In this paper, we concentrate on the specialization $x_i=y_i=1$. We apply our results from \cite{CS}--\cite{CS4} to establish several statements in elementary number theory.
First, we note how certain  automorphisms of the snake graph translate to the continued fractions.

\begin{thmIntro}[Theorem~\ref{thm pal1}]
 \label{thm pal1}
 A snake graph  has a rotational symmetry at its center tile if and only if the corresponding continued fraction is palindromic of even length.
\end{thmIntro}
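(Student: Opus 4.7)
The plan is to translate a $180^\circ$ rotational symmetry of $\calg[a_1,\ldots,a_n]$ about the center of a tile into a combinatorial condition on the continued fraction, using the bijection from~\cite{CS4}.

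First, I would recall that $\calg[a_1,\ldots,a_n]$ is a snake graph with a distinguished pair of endpoint tiles $G_1$, $G_d$, and that interchanging the endpoints corresponds, under the bijection of~\cite{CS4}, to reversing the continued fraction. A $180^\circ$ rotation about a tile-center is an involutive automorphism of the snake graph that fixes a central tile $G_c$ and swaps $G_1\leftrightarrow G_d$, which forces $d$ to be odd with $c=(d+1)/2$. Hence existence of any endpoint-swapping automorphism is equivalent to the palindromic condition $[a_1,\ldots,a_n]=[a_n,\ldots,a_1]$, i.e., $a_i=a_{n+1-i}$ for all $i$.

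The more subtle step is to verify that the parity of $n$ dictates the \emph{type} of endpoint-swapping automorphism. The claim is that, for a palindromic continued fraction, the self-reversal of the snake graph is realized as a $180^\circ$ rotation about the center of a tile precisely when $n$ is even, whereas for $n$ odd it is realized instead as a reflection across a diagonal of the snake graph (with the center of symmetry possibly sitting at an edge or vertex between tiles rather than at a tile). I would prove this by tracing the construction of $\calg[a_1,\ldots,a_n]$ in~\cite{CS4}, examining how the sign sequence of the snake graph is built from the $a_i$ and how it transforms under the involution, so as to distinguish orientation-preserving transformations (rotations) from orientation-reversing ones (reflections) according to the parity of $n$. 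Intuitively, each successive entry $a_i$ of the continued fraction contributes a $90^\circ$ change to the principal direction of the snake graph, so after $n$ entries the overall orientation is preserved iff $n$ is even. As a sanity check, the zigzag snake graph $\calg[k]$ corresponding to the palindromic single-entry $[k]$ has its $180^\circ$ rotational symmetry centered at a vertex between tiles, not at a tile, consistent with $n=1$ being odd.

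The converse is a direct verification: given an even-length palindromic continued fraction $[a_1,\ldots,a_n]$, the snake graph $\calg[a_1,\ldots,a_n]$ constructed via~\cite{CS4} has $d$ odd, and by the palindromic symmetry of the sign sequence one directly checks that the $180^\circ$ rotation about the central tile $G_{(d+1)/2}$ is a graph automorphism fixing $G_{(d+1)/2}$ and sending $G_i\mapsto G_{d+1-i}$.

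The main obstacle is the parity analysis: establishing that the orientation-preserving versus orientation-reversing character of the endpoint-swapping automorphism is controlled exactly by the parity of $n$. This requires a careful geometric bookkeeping of how the snake graph is assembled in the \cite{CS4} construction, relating each $a_i$ to a change in orientation and matching up the two halves of a palindromic continued fraction along the involution.
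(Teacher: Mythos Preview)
Your overall strategy---reduce the question to when the endpoint-swapping involution of the snake graph is a rotation rather than a reflection, and then show this is governed by the parity of $n$---is sound and leads to the same conclusion as the paper. But the mechanism you propose for the parity step is incorrect, and this is precisely where the argument must be made rigorous.

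The claim that ``each successive entry $a_i$ contributes a $90^\circ$ change to the principal direction of the snake graph'' does not match the construction: within a block of constant sign (corresponding to a single $a_i$) the snake graph \emph{zigzags}, while at a sign change (the boundary between $a_i$ and $a_{i+1}$) three consecutive tiles lie in a straight row or column. So there is no cumulative $90^\circ$-per-entry rotation of a ``principal direction''. Your sanity check on $\calg[k]$ is correct---this is a zigzag with the reflection as its symmetry---but for the wrong reason.

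The paper's argument avoids this pitfall by working directly with the sign sequence and using a single concrete geometric observation. If the rotation about the center of $G_i$ is an automorphism, it swaps $G_{i-1}$ and $G_{i+1}$, which forces $G_{i-1},G_i,G_{i+1}$ to be collinear; equivalently, the sign changes between $e_{i-1}$ and $e_i$. Hence the sign sequence splits exactly at its midpoint into two blocks of opposite sign, so the continued fraction decomposes as $[a_1,\ldots,a_s,a_{s+1},\ldots,a_n]$ with the break at the center, and the rotation forces $(a_1,\ldots,a_s)=(a_n,\ldots,a_{s+1})$, giving an even-length palindrome. Conversely, for $[a_1,\ldots,a_n,a_n,\ldots,a_1]$ the midpoint of the sign sequence is a sign change by construction, so the central three tiles are straight; one then checks (via Proposition~3.1(c), tracking that $e_d$ is the north edge) that the $180^\circ$ rotation at $G_{(d+1)/2}$ is an automorphism. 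Your sketch also glosses over this $e_d$ ambiguity (north versus east), which is exactly what distinguishes the two cases of Proposition~3.1(c) and must be pinned down to conclude palindromicity of the continued fraction rather than merely of the underlying graph.
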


Given a snake graph $\calg=\calg\cfa$, we introduce its palindromification as the snake graph $\calg_\leftrightarrow$ associated to $[a_n,\dots,a_2,a_1,a_1,a_2,\dots,a_n]$. Then we show the following.

\begin{thmIntro} [Theorem~\ref{thm palsnake}]
 Let $\calg=\calg[a_1,a_2,\ldots,a_n]$ be a snake graph  and $\calg_\leftrightarrow$ its palindromification. Let $\calg'=\calg[a_2,\ldots,a_n]$.
 Then 
\[m(\calg_{\leftrightarrow}) = m(\calg)^2 + m(\calg')^2.\]
\end{thmIntro}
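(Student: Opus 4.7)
The plan is to reduce the statement to an identity about continued fraction numerators and then establish this identity via a short matrix computation. By the main result of \cite{CS4}, $m(\calg[b_1,\ldots,b_k])$ equals the numerator $N[b_1,\ldots,b_k]$ of the associated continued fraction, so the statement is equivalent to
\[
N[a_n,\ldots,a_1,a_1,\ldots,a_n] \;=\; N[a_1,\ldots,a_n]^2 + N[a_2,\ldots,a_n]^2.
\]

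First I would recall the matrix description of continued fraction numerators. Setting $M_i = \left(\begin{smallmatrix} a_i & 1 \\ 1 & 0 \end{smallmatrix}\right)$, a direct induction on $n$ shows that the product $M := M_1 M_2 \cdots M_n$ has entries
\[
M = \begin{pmatrix} N[a_1,\ldots,a_n] & N[a_1,\ldots,a_{n-1}] \\ N[a_2,\ldots,a_n] & N[a_2,\ldots,a_{n-1}] \end{pmatrix},
\]
with the conventions $N[\,]=1$ and $N$ of a negative-length sequence equal to $0$. Because each $M_i$ is symmetric, the matrix corresponding to the reversed sequence $[a_n,\ldots,a_1]$ is $M^T$; in particular, $N[a_n,\ldots,a_1]=N[a_1,\ldots,a_n]$, which re-proves the classical palindrome invariance of the numerator.

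Next I would observe that the continued fraction $[a_n,\ldots,a_1,a_1,\ldots,a_n]$ corresponds to the product $M^T \cdot M$. The $(1,1)$ entry of $M^T M$ is simply the sum of the squares of the entries in the first column of $M$, which is precisely $N[a_1,\ldots,a_n]^2 + N[a_2,\ldots,a_n]^2$. Combined with the reduction above, this yields the desired formula.

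The main obstacle is essentially only to verify that the matrix conventions match the numerator conventions used in \cite{CS4}; this is routine. An alternative, more intrinsic route would be to establish a general grafting identity
\[
m\bigl(\calg[a_1,\ldots,a_n,b_1,\ldots,b_m]\bigr) = m(\calg[a_1,\ldots,a_n])\, m(\calg[b_1,\ldots,b_m]) + m(\calg[a_1,\ldots,a_{n-1}])\, m(\calg[b_2,\ldots,b_m])
\]
directly from the snake-graph matching recursion developed in \cite{CS}--\cite{CS4}, and then specialize both halves to the reverse sequence of $[a_1,\ldots,a_n]$. This would produce a bijective proof, but one would have to check carefully that the standard concatenation of snake graphs reproduces $\calg_\leftrightarrow$ under the paper's orientation conventions; that check is the only delicate point.
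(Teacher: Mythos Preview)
Your proof is correct, but it follows a genuinely different route from the paper's.

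The paper works entirely inside the snake ring. It invokes \cite[Theorem~5.1]{CS4} (a grafting/resolution identity) to obtain
\[
b_n\,\calg_{\leftrightarrow}
=\calg[a_n,\ldots,a_1]\,\calg[a_1,\ldots,a_n]
+\calg[a_n,\ldots,a_2]\,\calg[a_2,\ldots,a_n],
\]
an equality of snake graphs (up to the single-edge factor $b_n$), and then reads off the matching counts on both sides together with the symmetry $\calg[a_n,\ldots,a_1]\cong\calg[a_1,\ldots,a_n]$. This is essentially the ``alternative, more intrinsic route'' you sketch at the end, and the delicate concatenation/orientation check you worry about is exactly what \cite[Theorem~5.1]{CS4} packages. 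Your primary argument instead first applies \cite{CS4} to reduce everything to continued-fraction numerators and then uses the classical $2\times2$ matrix representation $M_i=\left(\begin{smallmatrix}a_i&1\\1&0\end{smallmatrix}\right)$, reading the desired identity off the $(1,1)$ entry of $M^TM$.

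What each approach buys: your matrix argument is shorter, self-contained, and needs none of the snake-graph calculus machinery beyond the single identification $m(\calg[b_1,\ldots,b_k])=N[b_1,\ldots,b_k]$. The paper's approach, on the other hand, proves a strictly stronger statement---an identity in the snake ring rather than merely an equality of integers---which is what one wants if the goal is to lift these relations to the level of cluster variables or Laurent polynomials (as the surrounding results in the paper do). Your proof is perfectly adequate for the theorem as stated; it just trades structural information for brevity.
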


As a consequence we obtain the following corollary.
\begin{corIntro} [Corollary~\ref{cor squares}]\begin{itemize}
\item[{\rm(a)}] 
 If $M$ is  a sum of two relatively prime squares then there exists a palindromic continued fraction of even length whose numerator is $M$.
\item[{\rm(b)}] For each positive integer $M$, the number of ways $M$ can be written as a sum of two relatively prime squares is exactly one half of  the number of palindromic even length continued fractions  with numerator $M$.
\end{itemize}
\end{corIntro}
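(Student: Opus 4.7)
The plan is to derive both parts directly from Theorem~\ref{thm palsnake} combined with standard facts about continued fractions. For part (a), given a representation $M = p^2 + q^2$ with $\gcd(p,q) = 1$, I would assume without loss of generality that $p \geq q \geq 1$ and expand $p/q$ as a positive-integer continued fraction $[a_1, a_2, \ldots, a_n]$ via the Euclidean algorithm. The recursion $[a_1, \ldots, a_n] = a_1 + 1/[a_2, \ldots, a_n]$ together with the coprimality of numerator and denominator of a reduced continued fraction gives $q = N([a_2,\ldots,a_n])$, where $N(\cdot)$ denotes the numerator. Theorem~\ref{thm palsnake} then yields
\[ N([a_n,\ldots,a_1,a_1,\ldots,a_n]) = N([a_1,\ldots,a_n])^2 + N([a_2,\ldots,a_n])^2 = p^2 + q^2 = M, \]
so the palindromification $[a_n,\ldots,a_1,a_1,\ldots,a_n]$ is a palindromic continued fraction of even length with numerator $M$.

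For part (b), I would set up a map $\Phi$ sending each even-length palindromic continued fraction $[a_n,\ldots,a_1,a_1,\ldots,a_n]$ with numerator $M$ to the pair $(p, q) = (N([a_1,\ldots,a_n]), N([a_2,\ldots,a_n]))$; Theorem~\ref{thm palsnake} guarantees $p^2 + q^2 = M$, and the standard coprimality of consecutive CF convergents gives $\gcd(p,q)=1$, so $\Phi$ lands in the set of coprime representations. Part (a) shows that $\Phi$ is surjective. To prove it is 2-to-1, I would invoke the classical fact that each rational $r > 1$ admits exactly two positive-integer continued fraction expansions, namely $[a_1,\ldots,a_n]$ with $a_n \geq 2$ and $[a_1,\ldots,a_n - 1, 1]$, and these have opposite parity. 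Palindromifying the two expansions of $p/q$ produces two distinct even-length palindromic continued fractions, of lengths $2n$ and $2n+2$, both sent by $\Phi$ to $(p, q)$. Since a palindromic even-length continued fraction is uniquely determined by its first half read in reverse, these are the only preimages of $(p, q)$, and the claimed ratio follows.

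The main technical issue will be the 2-to-1 bookkeeping: verifying that palindromification is injective on positive-integer CF expansions and that every preimage of $(p, q)$ comes from one of the two CF expansions of $p/q$. A minor corner case is $p = q = 1$, corresponding to $M = 2$, since the rational $1$ admits only a single positive-integer continued fraction expansion, and this boundary case may require separate handling.
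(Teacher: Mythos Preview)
Your argument for part (a) is exactly the paper's: expand $p/q$ as a positive continued fraction and palindromify, invoking Theorem~\ref{thm palsnake} (equivalently Corollary~\ref{thm pal2} together with Theorem~\ref{thm1}).

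For part (b) the paper does not argue directly; it simply cites the bijections of \cite[Theorem~4.1]{CS4} between rationals, continued fractions, and snake graphs. Your route is more explicit and self-contained: you build the surjection $\Phi$ from even-length palindromic continued fractions to coprime pairs $(p,q)$ with $p^2+q^2=M$, and then use the classical fact that each rational $p/q>1$ has exactly two positive continued-fraction expansions (of opposite parity) to see that $\Phi$ is $2$-to-$1$. This is a clean elementary substitute for the cited bijection, and the injectivity of ``take the second half'' on palindromic words makes the preimage count immediate. The corner case you flag, $p=q=1$ and $M=2$, is genuinely exceptional: the paper's own definition of ``sum of two relatively prime squares'' requires $p>q\ge1$, so it is excluded there as well, and the single continued fraction $[1,1]$ sits outside the $2$-to-$1$ correspondence on both sides.
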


We also study palindromic snake graphs of odd length and give a formula whose numerator is a difference of two squares; however this formula is not reduced.

We then apply our results to Markov numbers. By definition, these are the integer solutions to the Markov equation $x^2+y^2+z^2=3xyz$. It was shown in \cite{BBH,Propp} that Markov numbers correspond to the cluster variables of the cluster algebra of the torus with one puncture. The snake graphs of these cluster variables are therefore called {\em Markov snake graphs.}
Each Markov number, hence each Markov snake graph, is determined by a line segment from $(0,0)$ to a point $(q,p)$ with a pair of relatively prime integers and $0<p<q$. 
We give a simple realization of the Markov snake graph using the Christoffel path from the origin to the point $(q,p)$, Section~\ref{sect: Christoffel}. 

 Let us mention that infinite continued fractions were used in \cite{Ser1} to express the Lagrange number $\sqrt{9m^2-4}/m$ of a Markov number $m$. Our approach here is different, since we express the Markov number itself as the numerator of a finite continued fraction. See also \cite{Ser2} for an explicit connection between geodesics on the modular surface (the quotient of the hyperbolic plane by the modular group) and continued fractions.

Finally, we study Markov band graphs which are constructed from the Markov snake graphs by adding 3 tiles and then identifying two edges. Geometrically, the Markov band graphs correspond  to the closed simple curves obtained by  moving the arc of the Markov snake graph infinitesimally away from the puncture. 
\begin{thmIntro} [Theorem~\ref{thm band}] Let $m$ be a Markov number and let $\band(m)$ be its Markov band graph.
 Then the number of perfect matchings of $\band(m)$ is  $3m$.
\end{thmIntro}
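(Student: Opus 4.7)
The plan is to interpret $\band(m)$ via the cluster algebra $\cala$ of the once-punctured torus, and to derive the count $3m$ from the Markov equation itself at the specialization $x_i=y_i=1$. The key observation is that $\band(m)$ is the band graph of the simple closed curve $\gamma$ on the torus obtained by pushing the Markov arc $\alpha$ (whose snake graph is $\calg(m)$) infinitesimally off the puncture: the three added tiles correspond to the triangulation arcs that $\gamma$ must cross near the puncture in order to close up, while the edge identification reflects the gluing of the two endpoints of $\alpha$.  By the band graph matching formula of \cite{MSW2}, the Laurent polynomial $x_\gamma\in\cala$ satisfies $x_\gamma|_{x_i=y_i=1}=m(\band(m))$.

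The first (and crucial) step is to relate $x_\gamma$ to $x_\alpha$ and to the element $x_\ell$ corresponding to the small loop $\ell$ around the puncture, using the skein-type identities for surface cluster algebras (equivalent to the snake-graph calculus of \cite{CS,CS2,CS3}).  The heuristic is that in the skein algebra, the product $\alpha\cdot\ell$ resolves at the two intersections near the puncture into a sum of multicurves whose dominant term is $\gamma$, the remaining ``error'' resolutions cancelling in pairs under the tag-swapping involution of the once-punctured torus (which preserves Markov numbers) at the specialization.  One thereby obtains
\[ m(\band(m)) \;=\; x_\alpha\big|_{x_i=y_i=1}\cdot x_\ell\big|_{x_i=y_i=1}. \]

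The second step is then to evaluate the two factors.  By \cite{CS4}, $x_\alpha|_{x_i=y_i=1}=m(\calg(m))=m$.  The loop element $x_\ell$ admits the expansion $x_\ell=(X^2+Y^2+Z^2)/(XYZ)$ in any cluster $\{X,Y,Z\}$, and by the Markov equation $X^2+Y^2+Z^2=3XYZ$ this specializes to $3$ at $X=Y=Z=1$.  Combining, $m(\band(m))=m\cdot 3=3m$.

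The main technical obstacle is the first step: pinning down the precise cancellation of the ``error'' resolutions at the specialization requires a careful case analysis in the skein/band graph calculus, and also requires being sure that the three added tiles and the identification in the construction of $\band(m)$ really realize the geometric push-off $\gamma$.  A cleaner alternative would be to construct a direct bijection $\Match(\band(m))\leftrightarrow \{1,2,3\}\times \Match(\calg(m))$, exploiting the threefold structure introduced by the three added tiles together with the edge identification; this combinatorial route would sidestep the cluster algebra machinery entirely and, given the palindromic nature of Markov snake graphs established earlier in the paper, is likely to be the most natural path.
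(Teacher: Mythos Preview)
Your primary route---factoring $m(\band(m))$ as $x_\alpha\big|_1\cdot x_\ell\big|_1$ via a skein resolution of $\alpha\cdot\ell$---is genuinely different from the paper's, and the gap you flag in the first step is real. The curve $\gamma$ goes only \emph{halfway} around the puncture (because $\alpha$ already has both endpoints there), so the product $\alpha\cdot\ell$ has two crossings and four resolution terms, none of which is obviously $\gamma$ alone; your ``error resolutions cancel under tag-swapping at the specialization'' is a hope, not an argument. Your formula $x_\ell=(X^2+Y^2+Z^2)/(XYZ)$ and the resulting specialization to $3$ are fine, but without the identity $x_\gamma=x_\alpha x_\ell$ (or an honest substitute) the proof does not close.

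The paper instead does exactly what you call the ``cleaner alternative,'' but phrased in the snake ring rather than as an explicit bijection. One cuts $\band(m)$ along the glueing edge to get a snake graph $\calg^+$ with $d+3$ tiles. The self-grafting formula of \cite[\S3.4]{CS2} gives $\calg^+=\band(m)+\calg_-$, where $\calg_-$ is $\calg_\zg$ with its first two tiles removed. The grafting-with-a-single-edge formula of \cite[\S3.3, case 3]{CS2} gives $\calg^+=\calg_\zg\cdot\calg'+\calg^-$, where $\calg'$ is the two-tile snake graph at the far end of $\calg^+$ and $\calg^-$ is $\calg_\zg$ with its last two tiles removed. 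Because Markov snake graphs are palindromic (Theorem~\ref{thm pal1}), $\calg_-\cong\calg^-$ and these cancel, leaving $\band(m)=\calg_\zg\cdot\calg'$; counting matchings gives $m\cdot 3$. So the factor $3$ arises concretely as the number of matchings of a two-tile snake graph, not from the Markov equation, and the only global input needed is the rotational symmetry of $\calg_\zg$---precisely the ``palindromic nature'' you anticipated exploiting.
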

This result has an interesting connection to number theory, because if $(m_1,m_2,m_3)$ is a solution of the Markov equation $x^2+y^2+z^2=3xyz$ then $(3m_1,3m_2,3m_3)$ is  a solution of the equation $x^2+y^2+z^2=xyz$.

Let us point out some other combinatorial approaches  to continued fractions.  In \cite{BQS} the authors gave a combinatorial interpretation of continued fractions as the number of ways to tile a strip of length n with dominoes of length two and stackable squares of length one. This was used in  \cite{BZ} to prove that every prime of the form $4m+1$ is the sum of two relatively prime squares. In \cite{AB}, certain palindromic conditions on the coefficients of an infinite continued fraction $[a_1,a_2, ....]$ were used to deduce transcendence of the corresponding real number.  The authors also considered weaker quasi-palindromic conditions of which our `almost palindromes' of section 3.3 are a special case. See also section 9 of the survey \cite{AA}.

The paper is organized as follows. In section \ref{sect 2} we recall results from earlier work and give a snake graph interpretation of the convergents of the continued fraction as well as for the Euclidean division algorithm. We study palindromic snake graphs in section \ref{sect 3} and Markov numbers in section \ref{sect 4}.

{\emph{Acknowledgements}:} We would like to thank Keith Conrad for helpful comments.

\section{Continued fractions in terms of snake graphs}\label{sect 2}

A \emph{continued fraction} is an expression of the form
\[[a_1,a_2,\ldots,a_n]= a_1+\cfrac{1}{a_2+\cfrac{1}{\ddots +\cfrac{1}{a_n}}}\]
where the $a_i$ are  integers (unless stated otherwise) and  $a_n\ne 0$. A continued fraction is called \emph{positive} if each $a_i$ is a positive integer, and it is called \emph{even} if each $a_i$ is a nonzero even (possibly negative) integer. 
 A continued fraction $[a_1,a_2,\ldots,a_n]$ is called \emph{simple} if for each $i\ge 1$ we have $a_i\ge 1$ and $a_1$ is an arbitrary integer.

In this paper,   continued fractions are positive unless stated otherwise. Even continued fractions and their snake graphs have been studied in \cite{LS6}, and we recall some of their results below.

\subsection{The snake graph of a continued fraction}
Following \cite{CS4}, for every continued fraction $\cfa$, we construct  a snake graph $\cfg$ in such a way that the number of perfect matchings of the snake graph is equal to the numerator of the continued fraction. A \emph{perfect matching} of a graph is a subset $P$ of the set of edges such that every vertex of the graph is incident to exactly one edge in $P$.

Recall that a {\em snake graph} $\calg$ is a connected planar graph consisting of a finite sequence of tiles $G_1,G_2,\ldots, G_d$ with $d \geq 1,$ such that
$G_i$ and $G_{i+1}$ share exactly one edge $e_i$ and this edge is either the north edge of $G_i$ and the south edge of $G_{i+1}$ or the east edge of $G_i$ and the west edge of $G_{i+1}$,  for each $i=1,\dots,d-1$. See Figure~\ref{fig FourTiles} for a complete list of snake graphs with 4 tiles.
 We denote by  $\calgSW$ the 2 element set containing the south and the west edge of the first tile of $\calg$ and by $\calgNE$ the 2 element set containing the north and the east edge of the last tile of $\calg$. 
A snake graph $\calg$ is called {\em straight} if all its tiles lie in one column or one row, and a snake graph is called {\em zigzag} if no three consecutive tiles are straight.
 We say that two snake graphs are \emph{isomorphic} if they are isomorphic as graphs.

\begin{figure}
\begin{center}
\scriptsize\scalebox{0.7}{ 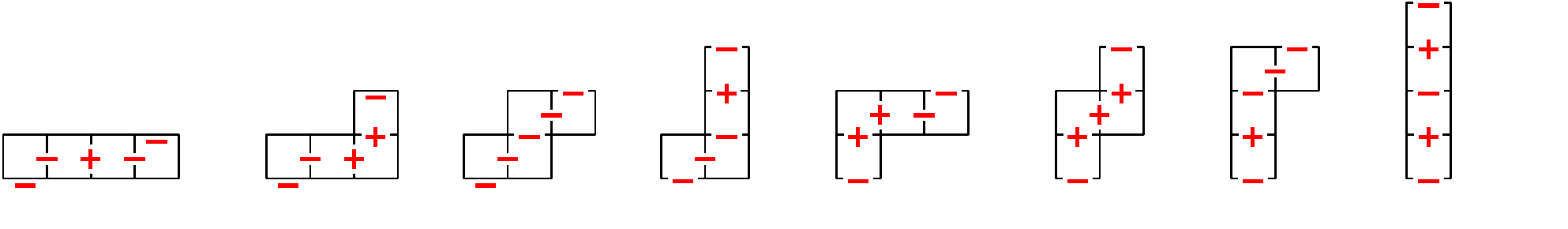}
 \caption{The snake graphs with 4 tiles together with their sign sequences and continued fractions.} \label{fig FourTiles}
\end{center}
\end{figure}

A {\em sign function} $f$ on a snake graph $\calg$ is a map $f$ from the set of edges of $\calg$ to the set $\{ +,- \}$ such that on every tile in $\calg$ the north and the west edge have the same sign, the south and the east edge have the same sign and the sign on the north edge is opposite to the sign on the south edge. 
The 
snake graph $\calg$ is determined by a sequence of tiles $G_1,\ldots,G_d$ and a sign function $f$ on the interior edges $e_1,\ldots,e_{d-1}$ of $\calg$.   Denote by $e_0\in\calgSW$ the south edge of the first tile and choose an edge $e_d\in\calgNE$. Then we obtain a sign sequence 
\begin{equation}
 \label{seq}
 (f(e_0) , f(e_1) ,\ldots ,f(e_{d-1}), f(e_d)).
\end{equation}
This sequence uniquely determines the snake graph together with a choice of a northeast edge $e_d\in \calgNE$. 

\medskip

Now let $\cfq$ be a continued fraction with all $a_i\ge 1$, and let $d= a_1+a_2+\cdots +a_n -1$.
Consider the following sign sequence
\begin{equation}
 \label{eqsign} 
\begin{array}{cccccccc}
  ( \underbrace{ -\ze,\ldots,-\ze},&  \underbrace{ \ze,\ldots,\ze},&  \underbrace{ -\ze,\ldots,-\ze},& \ldots,&  \underbrace{\pm\ze,\ldots,\pm\ze}) ,  \\
 a_1 & a_2 & a_3&\ldots&a_n
\end{array} 
\end{equation}
where  $\ze \in \{ -,+ \}$, $-\ze=\left\{\begin{array}{ll} + &\textup{if $\ze=-$;}\\ - &\textup{if $\ze=+$.}\end{array}\right. $  We define  $\sgn(a_i) = \left\{\begin{array}{ll} -\ze &\textup{if $i$ is odd;}\\ \ze &\textup{if $i$ is even.}\end{array}\right. $
 Thus each integer $a_i$ corresponds to a maximal subsequence of constant sign $\sgn(a_i)$ in the sequence (\ref{eqsign}). 
We let $\ell_i$ denote  the position of the last term in the $i$-th subsequence, thus  
$\ell_i = \sum_{j=1}^{i} a_{j}.$

 The snake graph
 $\calg[a_1,a_2,\ldots,a_n] $ of the continued fraction $[a_1,a_2,\ldots,a_n]$ is the snake graph with $d$ tiles determined by the sign sequence (\ref{eqsign}). 
 Examples are given in Figures \ref{fig FourTiles} and \ref{fig 8437}.
 Note that the two choices of the edge $e_d$ in $\calgNE$ will produce the two continued fractions $\cfa$ and $[a_1,a_2,\ldots,a_n-1,1]$; however these two continued fractions correspond to the same rational number, since $\frac{1}{a_n-1+\frac{1}{1}}=\frac{1}{a_n}$.

\begin{figure}
\begin{center}
 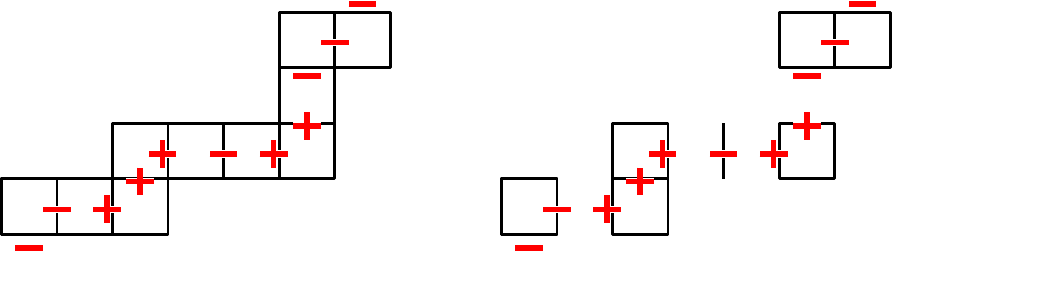
 \caption{The snake graph of the continued fraction $[2,3,1,2,3]$. The sign sequence $(-,-,+,+,+,-,+,+,-,-,-)=(f(e_0),f(e_1),\ldots, f(e_{10}))$ is given in red.  The sign changes occur in the tiles $G_{\ell_i}$, with $\ell_i=2,5,6,8$. The subgraphs $\calh_1,\ldots,\calh_5$, on the right are obtained by  removing the boundary edges of the tiles $G_{\ell_i},$  for $i=1,\ldots, n-1.$} \label{fig 8437}
\end{center}
\end{figure}

\smallskip

The following theorem is the key result of \cite{CS4}. It gives a combinatorial realization of continued fractions as quotients of cardinalities of sets.
\begin{thm}\cite[Theorem 3.4]{CS4}\label{thm1} 
 If  $m(\calg)$ denotes the number of perfect matchings of $\calg$ then
 \[ \cfa =\frac{m(\calg\cfa)}{m(\calg[a_2,\ldots,a_n])},\]
 and the right hand side is a reduced fraction. 
\end{thm}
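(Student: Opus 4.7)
The plan is to prove the equality and the reducedness simultaneously by induction on $n$. Writing $c_i := m(\calg[a_i, a_{i+1}, \ldots, a_n])$ for $1 \le i \le n$ and $c_{n+1} := m(\calg[\,]) = 1$ for the empty snake graph, the theorem amounts to the claim that $[a_i, \ldots, a_n] = c_i/c_{i+1}$ is a reduced fraction for every $i$. The base case $n = 1$ is a direct verification: $\calg[a_1]$ has $a_1-1$ tiles arranged in a zigzag (the sign sequence is constant of length $a_1$), and an easy induction on $a_1$ gives $m(\calg[a_1]) = a_1$.

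The inductive step rests on the combinatorial recursion
\begin{equation}\label{eq:matchingrec}
m(\calg[a_1, a_2, \ldots, a_n]) \;=\; a_1 \cdot m(\calg[a_2, \ldots, a_n]) \;+\; m(\calg[a_3, \ldots, a_n]).
\end{equation}
Granted \eqref{eq:matchingrec}, applying the inductive hypothesis to $[a_2, \ldots, a_n]$ gives $1/[a_2, \ldots, a_n] = c_3/c_2$, and hence
\[ [a_1, a_2, \ldots, a_n] \;=\; a_1 + \frac{c_3}{c_2} \;=\; \frac{a_1 c_2 + c_3}{c_2} \;=\; \frac{c_1}{c_2}, \]
which is the desired identity.

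To establish \eqref{eq:matchingrec} I first observe that the sign sequences naturally identify $\calg[a_2, \ldots, a_n]$ with the sub-snake-graph of $\calg[a_1, \ldots, a_n]$ on tiles $G_{a_1+1}, \ldots, G_d$ (up to a global sign flip), and $\calg[a_3, \ldots, a_n]$ with the sub-snake-graph on $G_{a_1+a_2+1}, \ldots, G_d$. The first $a_1$ tiles $G_1, \ldots, G_{a_1}$ form a zigzag block attached to the remainder through the single edge $e_{a_1}$, and the idea is to partition the perfect matchings of $\calg[a_1, \ldots, a_n]$ according to how they behave at this attaching edge, together with the state of the matching at the final tile of the zigzag block. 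A careful case analysis, naturally framed using the graft/cut operations on snake graphs developed in \cite{CS,CS2,CS3}, produces a bijection between the perfect matchings of $\calg[a_1, \ldots, a_n]$ and the disjoint union of $a_1$ copies of the perfect matchings of $\calg[a_2, \ldots, a_n]$ together with one copy of the perfect matchings of $\calg[a_3, \ldots, a_n]$.

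Reducedness then follows automatically: \eqref{eq:matchingrec} is exactly a Euclidean-algorithm step, so $\gcd(c_i, c_{i+1}) = \gcd(c_{i+1}, c_{i+2})$, and the chain terminates at $\gcd(a_n, 1) = 1$. The main obstacle is \eqref{eq:matchingrec} itself: producing the explicit bijection requires careful bookkeeping at the junction between the first zigzag block and the rest of the snake graph, and distinguishing the zigzag versus straight behaviour at the turn tile $G_{\ell_1}$ is where the snake-graph-specific combinatorics genuinely enters.
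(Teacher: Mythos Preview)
This theorem is not proved in the present paper; it is quoted from \cite{CS4}, so there is no in-text argument to compare against directly. Your inductive scheme is the natural one, and once the recursion
\[
m(\calg[a_1,\ldots,a_n]) = a_1\,m(\calg[a_2,\ldots,a_n]) + m(\calg[a_3,\ldots,a_n])
\]
is established, both the equality $[a_1,\ldots,a_n]=c_1/c_2$ and the reducedness follow exactly as you describe: the recursion is a Euclidean step, so $\gcd(c_i,c_{i+1})=\gcd(c_{i+1},c_{i+2})$ all the way down to $\gcd(a_n,1)=1$. The base case is also correct: a constant sign sequence of length $a_1$ gives a zigzag with $a_1-1$ tiles, which has $a_1$ perfect matchings.

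The gap is that you do not actually prove the recursion. Saying that ``a careful case analysis, naturally framed using the graft/cut operations \ldots produces a bijection'' is a description of what has to be done, not a proof. This recursion \emph{is} the theorem: the rest is formal. To close the gap you need an explicit argument, for instance by conditioning a perfect matching of $\calg[a_1,\ldots,a_n]$ on which edges of the initial zigzag block $G_1,\ldots,G_{a_1}$ it uses. A zigzag on $a_1-1$ tiles has exactly $a_1$ matchings; for $a_1-1$ of them the boundary edge shared with $G_{a_1}$ is free and the matching extends over all of $\calg[a_2,\ldots,a_n]$, while for exactly one of them that edge is forced, which pins the matching on the tiles $G_{a_1},\ldots,G_{a_1+a_2}$ and leaves precisely $\calg[a_3,\ldots,a_n]$ to be matched. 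Working this out carefully (and checking that the last case really contributes $m(\calg[a_2,\ldots,a_n])+m(\calg[a_3,\ldots,a_n])$ rather than something else) is where the substance lies; you should write it down rather than defer to unspecified operations from \cite{CS,CS2,CS3}. The proof in \cite{CS4} proceeds via the auxiliary pieces $\calh_1,\ldots,\calh_n$ obtained by deleting boundary edges at the sign-change tiles $G_{\ell_i}$ (as in Figure~\ref{fig 8437}), which packages the same bijection slightly differently.
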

\begin{example} In Figure \ref{expm}, we show the set of all perfect matchings of several snake graphs. 
\begin{figure}
\begin{center}
\scalebox{0.8}{ 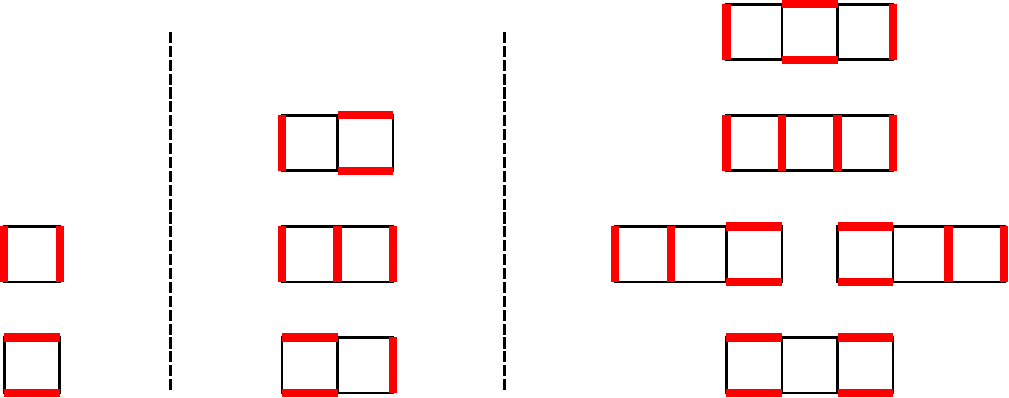}
 \caption{Small snake graphs and their perfect matchings, $\calg[2]$ (left), $\calg[3]$ (center), $\calg[2,2]$ (right)}\label{expm}
\end{center}
\end{figure}
\end{example}

\subsection{Convergents}
 Then \emph{$n$-th convergent} of the continued fraction
 $[a_1,a_2,\ldots, a_s]$ is the continued fraction 
$[a_1,a_2,\ldots, a_n]$, for $1\le n\le s$. 
By Theorem \ref{thm1}, we have that the numerator of $n$-th convergent is the number of perfect matchings of the initial segment $\calg[a_1,a_2,\ldots,a_n]$ of $\calg [a_1,a_2,\ldots,a_s]$ and the denominator of the $n$-th convergent is the number of perfect matchings of $\calg[a_2,\ldots,a_n]$.
Define \[p_1=a_1\quad,\quad p_2=a_2p_1+1\quad,\quad p_n=a_np_{n-1}+p_{n-2}, \textup{ for $n\ge 3$,} \] and 
\[ q_1=1 \quad,\quad q_2=a_2\quad,\quad q_n=a_nq_{n-1}+q_{n-2}, \textup{ for $n\ge 3$}.\]

It is well known that the $n$-th convergent of the continued fraction $[a_1,a_2,\ldots,a_s]$ is equal to $p_n/q_n$, see for example \cite[Theorem 149]{HW}.

\begin{example} \label{ex8437} The continued fraction
 $[2,3,1,2,3]=\frac{84}{37} $ has convergents 
 \[ [2,3,1,2,3]=\frac{84}{37}  \qquad [2,3,1,2]=\frac{25}{11} \qquad [2,3,1]=\frac{9}{4} \qquad [2,3]=\frac{7}{3}\qquad [2]=\frac{2}{1}\]
  
   The corresponding snake graphs are the following.\\ 
\begin{center} \scriptsize  \scalebox{0.7}{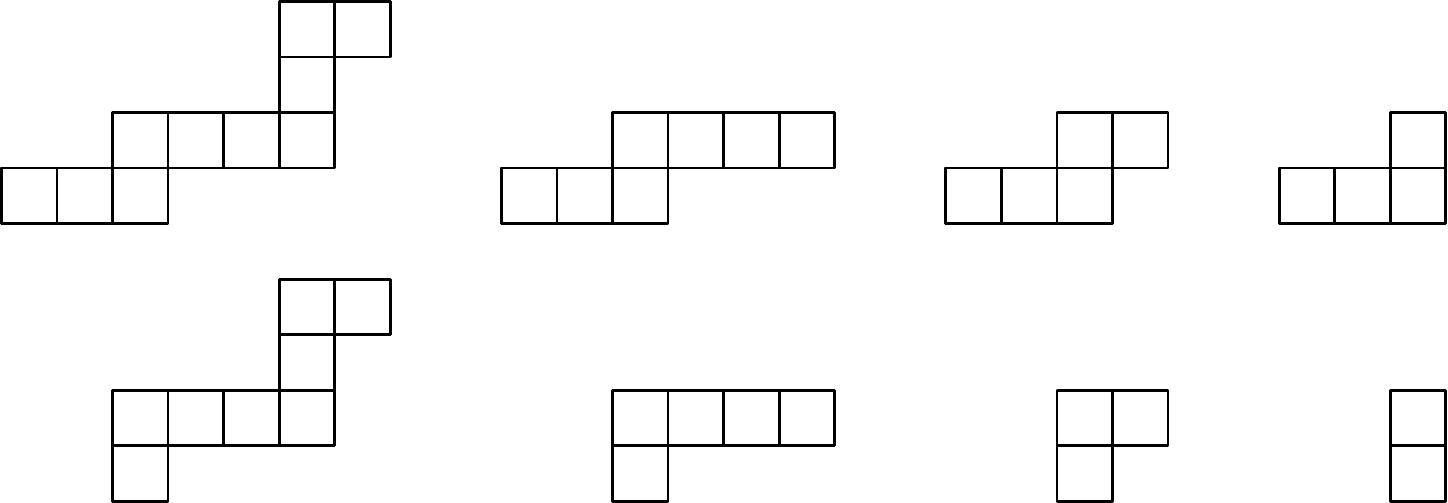}\end{center}
  where the number in the tile $G_i$ indicates the number of perfect matchings of the  subsnake graph given by the first $i$ tiles. 
  The top row shows the snake graphs corresponding to the numerators of the convergents, and the bottom row those corresponding to the denominators. Note that all the snake graphs in the top row are initial segments of the first snake graph in that row. The snake graphs in the second row are obtained from those in the first row by removing the 4 vertices of  the initial tile and all incident edges.
\end{example}

\subsection{Combinatorial realization of division algorithm}

In this section we illustrate division algorithm in terms of snake graphs for positive continued fractions and also for even continued fractions. 

In  \cite{LS6}  it is shown that if the same rational number is represented as a positive continued fraction $\cfa$ and as an even continued fraction $\cfb$ then the snake graphs associated to $\cfa $ and $\cfb$ are isomorphic.

\begin{remark}[\cite{LS6}]
 \label{rem:: [LS]}
 Let $p,q$ be relatively prime integers with $p>q>0$. Then
\begin{itemize}
\item [\rm{(a)}] if $p $ or $q$ is even, then $p/q$ has a unique even continued fraction expansion.
\item [\rm{(b)}]
 if $p$ and $q$ are both odd then $p/q$ does not have an even continued fraction expansion.
\end{itemize}
\end{remark}

\begin{example}
Let us compute the continued fraction of Example \ref{ex8437}.  The Euclidean algorithm on the left gives the continued fraction $[2,3,1,2,3]=84/37$. The algorithm on the right gives the even continued fraction $[2,4,-4,2,-2]=84/37$.
 \[ 
\begin{array}
 {rclcrcl} 
 84 &=& 2\cdot 37 +10 &\quad\quad & 84 &=& 2\cdot 37 +10 \\
 37 &=& 3\cdot 10 +7 &\quad & 37 &=& 4\cdot 10 +(-3) \\
 10 &=& 1\cdot 7 +3&\quad & 10 &=& (-4)(-3)+(-2)  \\
 7 &=& 2\cdot 3 +1 &\quad & -3 &=& 2(-2)+1\\
 3 &=& 3\cdot 1 &\quad & -2 &=&(-2)1 \\
\end{array}
 \]

 We point out that the remainders can also be realized as numbers of perfect matchings of subgraphs of the snake graph if one starts counting at the north east end of the snake graph and the division algorithm can be seen as a sequence of identities of snake graphs as follows.
\begin{center}  \scriptsize\scalebox{0.7}{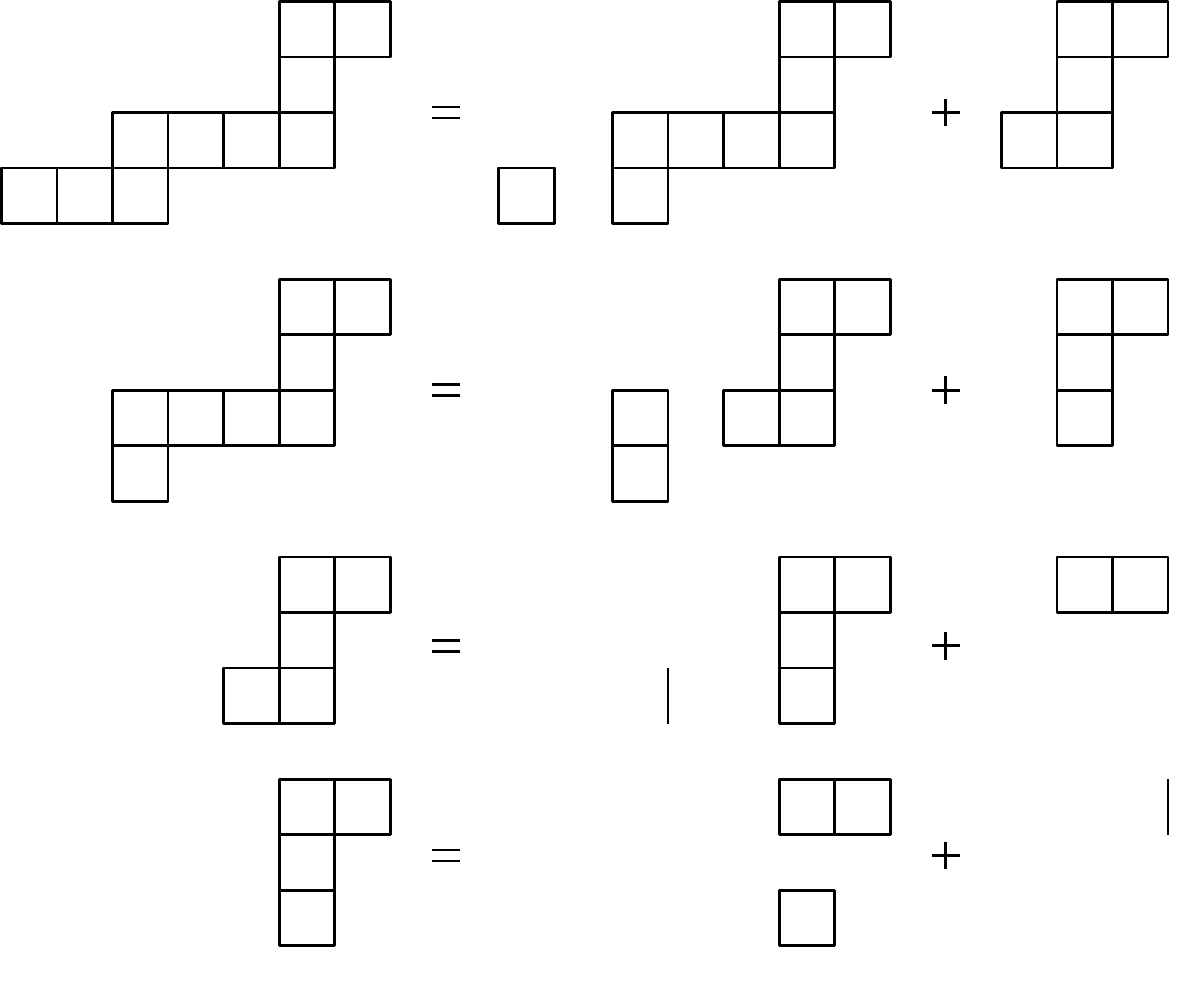}\end{center}

These identities mean that there is a bijection between the sets of perfect matchings of the snake graphs on either side. For the even continued fraction, the computation has the following realization in terms of snake graphs. \\
\begin{center} \tiny \scalebox{0.7}{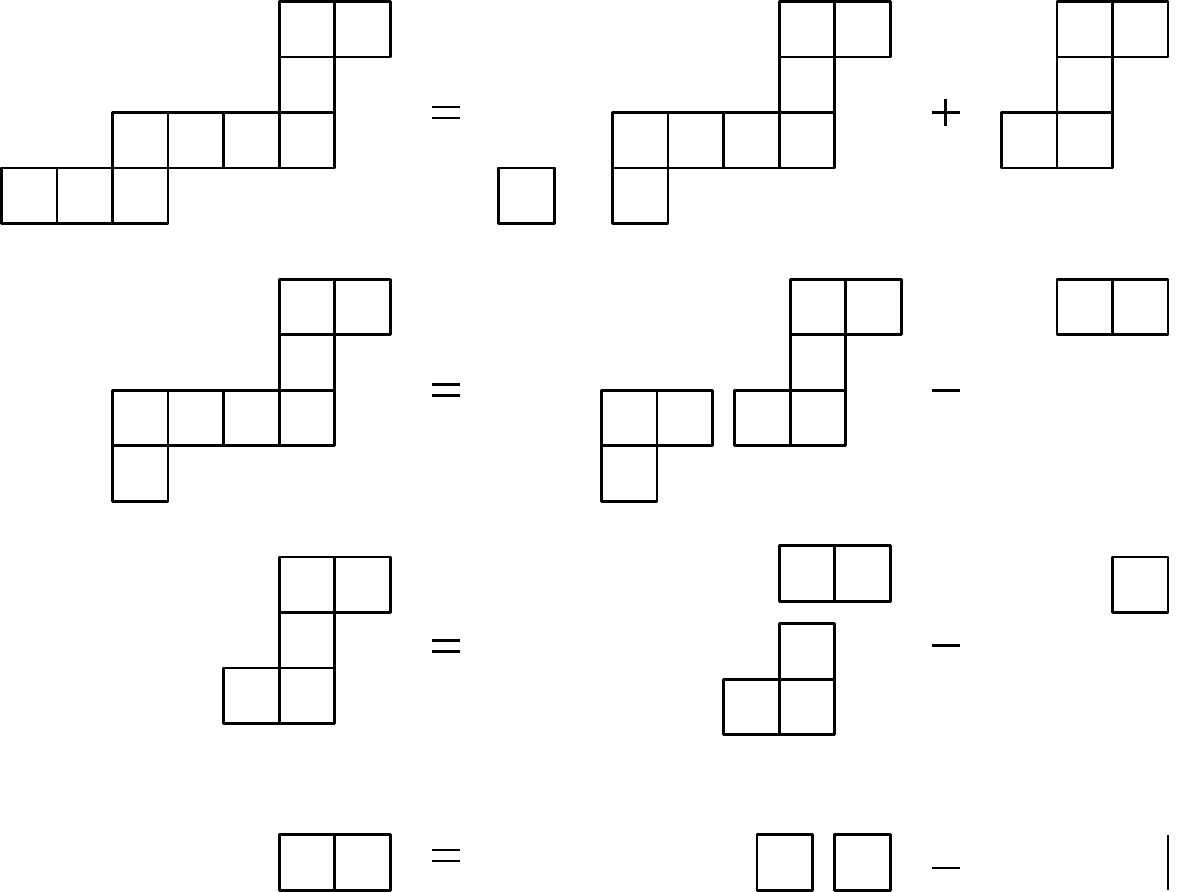}\end{center}
\end{example}
\smallskip

\section{Palindromification}\label{sect 3} 
The rotation of a snake graph by 180  degrees as well as the flips at the lines $y=x$ and $y=-x$ produce isomorphic snake graphs. We start by describing the action of these isomorphisms on the continued fractions. First note that we have an equality 
\[\calg\cfa = \calg[a_1,a_2,\ldots,a_{n}-1,1],\] 
because the change in the continued fraction corresponds simply to changing the choice of the edge $e_d\in\calgNE.$ 

Note that if $a_n=1$, then in the above equation, the coefficient $a_n-1$ is zero.  However, it is easy to see that $[a_1,\ldots, a,0,b,\ldots,a_n]=[a_1,\ldots, a+b,\ldots,a_n]$.\footnote{This follows from $a+\frac{1}{0+\frac{1}{b}}=a+b$.} Thus if $a_n=1$ then $\calg[a_1,a_2,\ldots,a_{n}-1,1]=\calg[a_1,a_2,\ldots,a_{n-1}+1].$

\begin{prop}\label{propiso}
We have the following isomorphisms.  
\begin{itemize}
\item [(a)] The flip at $y=x$.  
\[\calg\cfa\cong\calg[1,a_1-1,a_2,\ldots,a_n].\]
\item [(b)] The flip at $y=-x$.
\[\calg\cfa\cong\left\{\begin{array}{ll}
\calg[{1,a_{n}-1},\ldots,a_2,a_1] &\textup{if $e_d$ is north;}\\
\calg\caf&\textup{if $e_d$ is east.}\end{array}\right.\]
\item [(c)] The rotation by 180 degrees.
\[\calg\cfa\cong\left\{\begin{array}{ll}\calg\caf &\textup{if $e_d$ is north;}\\
\calg[1,a_n-1,\ldots,a_2,a_1]&\textup{if $e_d$ is east.}
\end{array}\right.\]
\end{itemize}
\end{prop}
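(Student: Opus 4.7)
The plan is to prove each of the three isomorphisms by tracking how the geometric symmetry acts on the sign sequence (\ref{seq}) that characterizes the snake graph together with a choice of $e_d \in \calgNE$. Recall that the block structure (\ref{eqsign}) of this sign sequence encodes the continued fraction, so proving an isomorphism amounts to exhibiting a sign sequence for the target continued fraction that arises from the source sequence under the given symmetry.

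For part (a), I would observe that the flip at $y = x$ fixes the tile sequence $G_1, \ldots, G_d$ and exchanges the direction labels via $N \leftrightarrow E$ and $S \leftrightarrow W$. The sign rule ($N = W$, $S = E$, $N \neq S$) is symmetric under this swap, so the original sign function transports to the flipped graph by keeping the value on every abstract edge unchanged. Consequently, the interior signs $f(e_1), \ldots, f(e_{d-1})$ are unaffected. The only change occurs at the SW end: the new south edge of $G_1$ is the old west edge, and by the sign rule on $G_1$ its sign is opposite to that of the old south edge. Thus the new sign sequence differs from the old only in its zeroth entry, which is flipped from $-\ze$ to $\ze$. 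This converts the initial block of $a_1$ copies of $-\ze$ into a singleton $\ze$ followed by $a_1 - 1$ copies of $-\ze$, which by (\ref{eqsign}) (with starting sign $\ze' = -\ze$) is precisely the sign sequence for $[1, a_1 - 1, a_2, \ldots, a_n]$. The degenerate case $a_1 = 1$ is absorbed by the rewriting rule $[\ldots, a, 0, b, \ldots] = [\ldots, a+b, \ldots]$ from the footnote.

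For part (c), I would use that the $180^{\circ}$ rotation reverses the tile sequence ($G_i \mapsto G_{d+1-i}$) and exchanges the SW and NE corners; the sign rule remains invariant and interior edges are the same abstract edges, so the new sign sequence is a reversed copy of the old one up to adjustments at the two ends. When the original $e_d$ is the north edge of $G_d$, this edge becomes the new south edge of the new first tile and the new sign sequence is literally $(s_d, s_{d-1}, \ldots, s_0)$, whose block structure $a_n, a_{n-1}, \ldots, a_1$ yields $\calg\caf$. When the original $e_d$ is instead the east edge of $G_d$, the new south edge of the new first tile is the old north edge of $G_d$, whose sign is opposite to $s_d$ by the sign rule on $G_d$; the reversed sequence thus has its leading entry flipped, splitting the terminal block of length $a_n$ into a singleton plus a block of length $a_n - 1$, producing the sign sequence of $[1, a_n - 1, a_{n-1}, \ldots, a_1]$. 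Part (b) then follows by composing (a) and (c), since the flip at $y = -x$ equals the flip at $y = x$ followed by the $180^{\circ}$ rotation, and in each of the two cases one reads off the resulting continued fraction.

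The main obstacle is the bookkeeping at the two boundary edges $e_0$ and $e_d$: one must carefully track which element of $\calgSW$ (respectively $\calgNE$) plays the role of the new $e_0$ (respectively new $e_d$), and adjust the starting sign $\ze$ accordingly, since the continued fractions $[a_1,\ldots,a_n]$ and $[a_1,\ldots,a_n-1,1]$ correspond to the two choices of $e_d$. The interior of the sign sequence behaves transparently under the symmetries, so the substantive content of the proof reduces to this boundary analysis together with the identification of the resulting block structure.
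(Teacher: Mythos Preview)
Your approach is essentially the same as the paper's: both arguments track how the geometric symmetry acts on the sign sequence (\ref{seq}) and read off the resulting block structure. The only organizational difference is that you prove (a) and (c) directly and obtain (b) by composition, whereas the paper proves (a) and (b) directly and dismisses (c) as ``similar to part (b)''; your treatment of the boundary bookkeeping is in fact more explicit than the paper's rather terse argument.
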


\begin{proof} 
\begin{itemize}
\item [(a)]  Rotating the snake graph $\calg\cfa$ associated to a continued fraction $\cfa$ along the line $y=x$ will change its sign function 
\begin{equation*}
\begin{array}{cccccccc}
  ( \underbrace{ -\ze,\ldots,-\ze},&  \underbrace{ \ze,\ldots,\ze},&  \underbrace{ -\ze,\ldots,-\ze},& \ldots,&  \underbrace{\pm\ze,\ldots,\pm\ze})\\
 a_1 & a_2 & a_3&\ldots&a_n
\end{array} 
\end{equation*}
to the sign function
\begin{equation*} 
\begin{array}{ccccccccc}
  ( \underbrace{-\ze}, & \underbrace{\ze,\ldots,\ze},&  \underbrace{ -\ze,\ldots,-\ze},&  \underbrace{ \ze,\ldots,\ze},& \ldots,&  \underbrace{\mp\ze,\ldots,\mp\ze})\\
1& a_1-1 & a_2 & a_3&\ldots&a_n
\end{array} 
\end{equation*}
if $a_1\neq 1$ and vice versa if $a_1=1$ so the isomorphism of snake graphs follows.

\item [(b)]
 Let $\calg'$ be the image of $\calg$ under the flip. Denote by $e_1',\ldots,e_{d-1}'$ the interior edges of $\calg'$ and by $e_0'$ the south edge of the first tile. The flip is an isomorphism of graphs $\calg\to\calg'$ that maps the last tile of $\calg$ to the first tile of $\calg'$. Moreover, if $e_d$ is the east edge of the last tile in $\calg$ then it is mapped to the south edge $e_0'$ of the first tile in $\calg'$, and we have $\calg'=\calg[a_n,\ldots,a_2,a_1]$. On the other hand, if $e_d$ is the north edge of the last tile in $\calg$ then it is mapped to the west edge of the first tile in $\calg'$ and we have $\calg'=\calg[a_n,\ldots,a_2,a_1]$.

\item [(c)] Similar to part (b).\qedhere
\end{itemize}
\end{proof}

In particular, we obtain a new proof of the following classical result.
\begin{cor}
 The continued fractions $\cfa$ and $\caf$ have the same numerator.
\end{cor}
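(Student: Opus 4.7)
The plan is to combine Theorem~\ref{thm1} with Proposition~\ref{propiso}. By Theorem~\ref{thm1}, the reduced numerator of the continued fraction $[a_1,a_2,\ldots,a_n]$ is $m(\calg[a_1,a_2,\ldots,a_n])$, and similarly the reduced numerator of $[a_n,\ldots,a_2,a_1]$ is $m(\calg[a_n,\ldots,a_2,a_1])$. So it suffices to produce a graph isomorphism
\[
\calg[a_1,a_2,\ldots,a_n] \;\cong\; \calg[a_n,\ldots,a_2,a_1],
\]
since isomorphic graphs trivially admit bijective sets of perfect matchings.

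Such an isomorphism is exactly the content of Proposition~\ref{propiso}(c) (or equivalently (b)). More precisely, I would choose the northeast edge $e_d \in \calgNE$ of $\calg[a_1,\ldots,a_n]$ to be the \emph{north} edge of the last tile. Then the $180^{\circ}$-rotation of the ambient plane maps $\calg[a_1,\ldots,a_n]$ onto $\calg[a_n,\ldots,a_1]$ by part (c) of the proposition. This rotation is a bijection of tiles that preserves shared edges, so it induces a graph isomorphism between the two snake graphs.

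Putting these pieces together: the $180^{\circ}$-rotation yields $m(\calg[a_1,\ldots,a_n]) = m(\calg[a_n,\ldots,a_1])$, and Theorem~\ref{thm1} converts this identity of matching counts into the desired equality of numerators.

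There is no serious obstacle here; the only small care needed is that Proposition~\ref{propiso}(c) distinguishes cases according to whether $e_d$ is north or east. Since the rational number $[a_1,\ldots,a_n]$ does not depend on the choice of $e_d$ (as noted just before the proposition, the two choices give $[a_1,\ldots,a_n]$ and $[a_1,\ldots,a_n-1,1]$, which represent the same fraction and hence have the same reduced numerator), one is free to pick the convenient case. Choosing $e_d$ to be north cleanly gives the reversed continued fraction and finishes the argument.
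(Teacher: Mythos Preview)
Your proof is correct and follows exactly the approach the paper intends: the corollary is stated immediately after Proposition~\ref{propiso} with the phrase ``In particular, we obtain a new proof,'' and no further argument is given. Your write-up simply spells out what the paper leaves implicit, namely combining the graph isomorphism from Proposition~\ref{propiso}(c) with Theorem~\ref{thm1}, and your handling of the north/east case distinction for $e_d$ is the right way to make the argument clean.
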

\begin{example}
We illustrate the result on the snake graph $\calg=\calg[2,2,1]=\calg[2,3]$.
We see from Figure \ref {fig FourTiles} that the flip at $y=x$ produces  $\calg[1,1,3]$, the
 flip at $y=-x$ produces  $\calg[3,1,1]$ and the rotation produces $\calg[1,2,2]$.
 
Note that $e_d$ is north in $\calg[2,2,1]$ and east in $\calg[2,3]$. 
Therefore from the formulas in the proposition, we have the desired result.
\end{example}
 
\begin{definition}
 A
 continued fraction $[a_1,a_2,\ldots,a_n]$  is said to be {\em of even length} if $n$ is even. It is called {\em palindromic} if the sequences $(a_1,a_2,\ldots,a_n)$ and $(a_n,\ldots,a_2,a_1)$ are equal.

A snake graph $\calg$ is called {\em palindromic} if it is the snake graph of a palindromic continued fraction. Moreover $\calg$ is called {\em palindromic of even length}  if it is the snake graph of a palindromic continued fraction of even length.

A snake graph $\calg$ has a {\em rotational symmetry at its center tile} if $\calg$ has a tile $G_i$, such that the rotation about 180\/$^\circ$ at the center of this tile is an automorphism of $\calg$. \end{definition}

See Figure \ref{fig rotation} for examples.

\begin{remark} 
 To avoid confusion, we emphasize that if a snake graph   $\calg\cfa$   is palindromic of even length, the word ``even'' refers to the number $n$ of entries in the continued fraction. Thus ``even length'' means that the continued fraction is of even length and \emph{not} that the snake graph has an even number of tiles. Actually we shall show below that palindromic snake graphs of even length always have an odd number of tiles.
\end{remark}
\begin{remark} If a snake graph $\calg$ has a rotational symmetry at its center tile then
  the number of tiles of $\calg$  must be odd and the center tile is the $i$-th tile of $\calg$, where $i=(d+1)/2$ and $d$ is the total number of tiles. 
\end{remark}

\begin{thm}
 \label{thm pal1}
 A snake graph is palindromic of even length if and only if it has a rotational symmetry at its center tile.
\end{thm}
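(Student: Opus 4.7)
The plan is to translate both sides of the biconditional into a symmetry condition on the sign sequence $(s_0, s_1, \ldots, s_d) = (f(e_0), \ldots, f(e_d))$ associated to $\calg = \calg[a_1,\ldots,a_n]$. Recall that this sequence consists of $n$ maximal constant blocks of sizes $a_1,\ldots,a_n$ whose signs alternate, starting with $-\ze$. The goal is to prove that both conditions are equivalent to the sign sequence being \emph{anti-palindromic}: $s_{d-j} = -s_j$ for all $j \in \{0,\ldots,d\}$. Note that anti-palindromicity already forces $d$ odd, since otherwise taking $j=d/2$ would give $s_{d/2} = -s_{d/2}$.

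First I would show that $\calg$ has a rotational symmetry at its center tile if and only if the sign sequence is anti-palindromic. Assume the $180^\circ$ rotation $\rho$ about the center of some tile $G_m$ is an automorphism of $\calg$. Then $\rho$ fixes $G_m$ and interchanges $G_{m+k} \leftrightarrow G_{m-k}$, which forces $d$ odd, $m=(d+1)/2$, and $\rho(e_j) = e_{d-j}$. The key geometric observation is that $\rho$ swaps N with S and E with W on every tile; since the sign function assigns one sign to the pair $(N,W)$ and the opposite sign to $(S,E)$ on each tile, $\rho$ pulls the sign function back to its negative. Applying this to each $e_j$ yields $s_{d-j} = -s_j$. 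Conversely, given an anti-palindromic sign sequence with $d$ odd, the rotation at the center of $G_{(d+1)/2}$ matches the block patterns on the two sides of $G_m$ and so is an automorphism.

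Next I would show the equivalence between anti-palindromicity and $[a_1,\ldots,a_n]$ being palindromic of even length. Block $i$ has size $a_i$ and sign $(-\ze)(-1)^{i-1}$. The condition $s_{d-j}=-s_j$ translates, at the level of blocks, to block $i$ matching block $n+1-i$ in size and being opposite in sign, giving $a_i = a_{n+1-i}$ together with $(-1)^{i-1} = -(-1)^{n-i}$; the sign equality simplifies to $n$ even. Conversely, when $n$ is even and $a_i = a_{n+1-i}$, both identities hold, and $d = 2\sum_{i \le n/2} a_i - 1$ is automatically odd, so anti-palindromicity follows.

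The main subtle point is justifying rigorously that the $180^\circ$ rotation at $G_m$ pulls the sign function back to its negative. This rests on a careful local analysis at $G_m$: for $\rho$ to be an automorphism, the two interior edges of $G_m$ must be geometrically opposite (so that $\calg$ passes straight through $G_m$, either vertically or horizontally), and once this is established, the sign-flipping behavior $f\circ\rho = -f$ propagates outward from $G_m$ by induction using the compatibility of the sign function across adjacent tiles.
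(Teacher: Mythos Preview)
Your argument is correct and complete in its essentials. The route, however, differs from the paper's. The paper establishes the forward direction by invoking Proposition~\ref{propiso}(c): once it is shown that $d$ is odd, that the three tiles $G_{i-1},G_i,G_{i+1}$ around the center are straight, and that consequently $e_d$ is the north edge, Proposition~\ref{propiso} identifies the $180^\circ$ rotation with the passage $\calg[a_1,\ldots,a_n]\mapsto\calg[a_n,\ldots,a_1]$, and palindromicity makes this an automorphism. The backward direction in the paper is closer to yours: it reads off the sign change at the center tile and matches the two halves of the continued fraction directly.

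Your approach bypasses Proposition~\ref{propiso} entirely by introducing the anti-palindromic condition $s_{d-j}=-s_j$ on the sign sequence as a common bridge. The observation that the $180^\circ$ rotation exchanges $\{N,W\}\leftrightarrow\{S,E\}$ on every tile, and hence sends any sign function to its global negative, is exactly the mechanism behind Proposition~\ref{propiso}(c), so you are in effect reproving the relevant special case in situ. This buys self-containment and makes the role of the sign sequence very transparent; the paper's version is more modular, reusing an already-established isomorphism lemma.

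One point deserves to be made explicit in your write-up: the sign sequence depends on the choice of $e_d\in\calg^{NE}$, so the equivalence ``rotational symmetry $\Leftrightarrow$ anti-palindromic'' only holds for the choice $e_d=\rho(e_0)$, i.e.\ the north edge of the last tile. You implicitly use this when asserting $\rho(e_j)=e_{d-j}$ for $j=0$, and the paper makes the same choice explicitly; it would strengthen your exposition to say so. With that clarification your argument is airtight.
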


\begin{proof}
 Suppose first that $\calg$ is a palindromic snake graph  of even length with corresponding  palindromic continued fraction
$[a_1,\ldots,a_n,a_n,\ldots,a_1]$,  and let $d$ be the number of tiles of $\calg$. 
From the construction in section \ref{sect 2}, we know that the sign sequence determined by the continued fraction has length $d+1=2(a_1+a_2+\cdots+a_n)$. This shows that $d$ is odd. Let $i=(d+1)/2$, and let $G_i$ be the $i$-th tile of $\calg$. Thus $G_i$ is the center tile of $\calg$. Therefore, the initial subgraph consisting of the first $i-1$ tiles of $\calg$ is isomorphic to $\calg [a_1,a_2,\ldots,a_n]$,  the terminal subgraph consisting of the last $i-1$ tiles of $\calg$ is isomorphic to  $\calg[a_n,\ldots,a_2,a_1] $
and  the 3 consecutive tiles $G_{i-1},G_i,G_{i+1}$ form a straight subgraph. Therefore the two interior edges $e_{i-1}$ and $e_{i}$ are parallel. Note that  the edge $e_{i-1}$ is the last interior edge of $\calg$ that belongs to the initial subgraph  $\calg[a_1,a_2,\ldots,a_n]$ and $e_{i}$ is the first interior edge of $\calg$ that belongs to the terminal subgraph  $\calg[a_n,\ldots,a_2,a_1]$. Thus $e_{i-1}$ and $e_{i}$ being parallel implies that $e_0$ and $e_d$ are parallel as well. Since $e_0$ is the south edge of $\calgSW$ it follows that   $e_d$ is the north edge in $\calgNE$. Now Proposition \ref{propiso} implies that  the rotation by 180$^\circ$ at the center of the tile $G_i$ is an automorphism of $\calg$.

Conversely, if $\calg$ has a rotational symmetry at its center tile $G_i$, then  the 3 consecutive tiles $G_{i-1},G_i,G_{i+1}$ form a straight subgraph, and thus the sign changes from the interior edge $e_{i-1}$ to the interior edge $e_i$. Consequently, if the subgraph given by the first $i-1$ tiles of $\calg$ is of the form $\calg[a_1,a_2,\ldots, a_s]$ and the subgraph given by the last $i-1$ tiles of $\calg$ is of the form $\calg[a_{s+1},\ldots, a_n]$, then because of the sign change at the tile $G_i$, we conclude that $\calg=\calg[a_1,\ldots,a_s,a_{s+1},\ldots,a_n]$.
The rotational symmetry now implies that  the sequences $(a_1,a_2,\ldots,a_s)$ and $(a_n, a_{n-1},\ldots, a_{s+1})$ are equal, as long as we choose the edge $e_d\in\calgNE$ to be the north edge such that $e_d$ is the image of the edge $e_0$ under this rotation. 
This shows that $\calg$ is palindromic of even length.
 \end{proof}

\begin{figure}
\begin{center}
\scalebox{0.7}{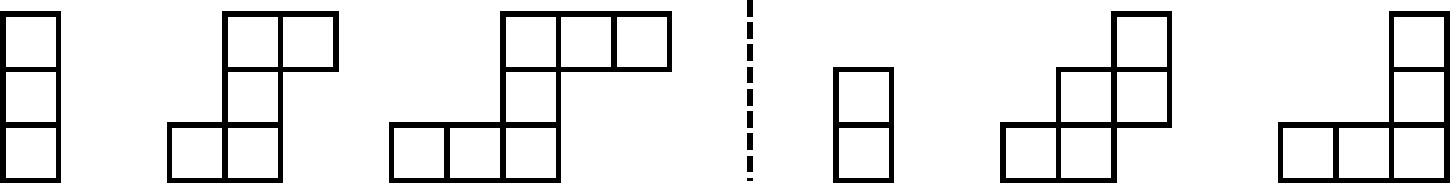}
\caption{The three snake graphs on the left are rotationally symmetric at their center tile. The three snake graphs on the right are not. }
\label{fig rotation}
\end{center}
\end{figure}
\begin{example}
 The continued fractions of the 3  snake graphs on the left of Figure~\ref{fig rotation} are $[1,1,1,1], [3,3]  $ and $[2,2,2,2]$, respectively. The snake graphs are rotationally symmetric at their center tile and the continued fractions are palindromic and of even length. On the other hand, the continued fractions of the  3  snake graphs on the right of Figure~\ref{fig rotation} are $[1,1,1], [6] $ and $[2,2,2]$, respectively. Each of these is palindromic but none is of even length. Accordingly, the snake graphs have no center tile rotational symmetry.
\end{example}
 
 Given a snake graph $\calg$, we can construct a palindromic snake graph of even length $\calg_{\leftrightarrow}$ by glueing two copies of $\calg$ to  a new center tile. This graph is called the {\em palindromification} of $\calg$.
More precisely, if $\calg=\calg[a_1,a_2,\ldots,a_n]$, then its palindromification is the snake graph $ \calg_{\leftrightarrow} =\calg[a_n,\ldots, a_2,a_1,a_1,a_2,\ldots,a_n]$.

\begin{thm} \label{thm palsnake}
 Let $\calg=\calg[a_1,a_2,\ldots,a_n]$ be a snake graph  and $\calg_\leftrightarrow$ its palindromification. Let $\calg'=\calg[a_2,\ldots,a_n]$.
 Then 
\[m(\calg_{\leftrightarrow}) = m(\calg)^2 + m(\calg')^2.\]
\end{thm}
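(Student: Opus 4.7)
The plan is to reduce the identity to a short matrix computation by combining Theorem~\ref{thm1} with the standard $2\times 2$ matrix encoding of continued fractions. By Theorem~\ref{thm1}, $m(\calg_\leftrightarrow)$ equals the numerator of the palindromic continued fraction $[a_n,\dots,a_1,a_1,\dots,a_n]$, so the task becomes computing this numerator in terms of $m(\calg)$ and $m(\calg')$.

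The key observation I would use is that for $M(a)=\bigl(\begin{smallmatrix} a & 1 \\ 1 & 0 \end{smallmatrix}\bigr)$, an induction on $n$ shows that
\[
M(a_1)M(a_2)\cdots M(a_n) \;=\; \begin{pmatrix} p_n & p_{n-1} \\ q_n & q_{n-1} \end{pmatrix},
\]
where $p_i,q_i$ are the convergents defined right before Example~\ref{ex8437}. Because $\det M(a_i)=-1$, the top-left and bottom-left entries of any such product are coprime, so $p_n/q_n$ is automatically the reduced form of $[a_1,\dots,a_n]$. By Theorem~\ref{thm1} this identifies $p_n=m(\calg)$ and $q_n=m(\calg')$.

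Next I would exploit the palindrome. Each $M(a_i)$ is symmetric, so reversing the product order is the same as transposing:
\[
M(a_n)M(a_{n-1})\cdots M(a_1) \;=\; \bigl(M(a_1)\cdots M(a_n)\bigr)^{T} \;=\; M^T,
\]
with $M := M(a_1)\cdots M(a_n)$. Hence the matrix product associated to the palindromic continued fraction factors as $M^T M$, and a direct computation gives its top-left entry as $p_n^2+q_n^2$. Since this top-left entry is precisely the numerator of $[a_n,\dots,a_1,a_1,\dots,a_n]$, we obtain $m(\calg_\leftrightarrow)=m(\calg)^2+m(\calg')^2$ as required.

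The whole argument is short once the matrix encoding is in hand; the only step requiring care is the verification that the top-left entry of $M(b_1)\cdots M(b_m)$ really computes the reduced numerator of $[b_1,\dots,b_m]$, together with the matching of $p_n,q_n$ to the snake-graph quantities from Theorem~\ref{thm1} (in particular, being careful that $q_n=m(\calg[a_2,\dots,a_n])$ and not some other convergent). A purely combinatorial alternative would construct a bijection between perfect matchings of $\calg_\leftrightarrow$ and the disjoint union $\Match(\calg)\times\Match(\calg)\,\sqcup\,\Match(\calg')\times\Match(\calg')$ by analyzing how a matching restricts across the central tile of $\calg_\leftrightarrow$ — this would be more intuitive but substantially longer, so I expect the matrix approach to be the cleanest route.
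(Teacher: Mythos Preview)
Your argument is correct, but it is a genuinely different route from the paper's. The paper works inside the snake ring: it invokes \cite[Theorem~5.1]{CS4} to obtain an identity of snake graphs
\[
b_n\,\calg_{\leftrightarrow} \;=\; \calg[a_n,\ldots,a_1]\,\calg[a_1,\ldots,a_n] + \calg[a_n,\ldots,a_2]\,\calg[a_2,\ldots,a_n],
\]
and then specializes to perfect matching counts. Your approach instead passes through the classical continuant formalism, using the product of matrices $M(a_i)=\bigl(\begin{smallmatrix} a_i & 1\\ 1 & 0\end{smallmatrix}\bigr)$ and the observation that the palindrome corresponds to $M^T M$; Theorem~\ref{thm1} is invoked only to match the numerical convergents $p_n,q_n$ with $m(\calg),m(\calg')$. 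What your method buys is self-containment and elementariness: it needs nothing from the snake graph calculus of \cite{CS2,CS4} beyond Theorem~\ref{thm1}, and the matrix factorization makes the source of the sum of squares transparent. What the paper's method buys is that it proves a strictly stronger statement---an identity of weighted snake graphs rather than a bare equality of integers---which is what one would want for lifting to cluster variables and for the companion identities (Remark~\ref{rem 310}, Corollary~\ref{thm pal2}, Theorem~\ref{thm oddpal}); your computation recovers the denominator identity of Corollary~\ref{thm pal2} equally easily from the off-diagonal entry of $M^T M$, but does not see the underlying bijection of matchings.
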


\begin{proof}
By definition, we have 
 $ \calg_{\leftrightarrow} =\calg[a_n,\ldots, a_2,a_1,a_1,a_2,\ldots,a_n]$. 
  Using
 \cite[Theorem 5.1]{CS4}, we see that multiplying with the single edge $b_n$ gives 
  \[b_n\calg_{\leftrightarrow}=\calg[a_n,\ldots,a_1]\,\calg[a_1,\ldots,a_n] + \calg[a_n,\ldots,a_{2}]\,\calg[a_{2},\ldots,a_n]\]
   which, by symmetry, is equal to
 \[\calg[a_1,\ldots,a_n]^2 + \calg[a_2,\ldots,a_{n}]^2 = \calg\calg + \calg'\calg'.
 \]
The result now follows by counting perfect matchings on both sides.
\end{proof}
 
\begin{remark}\label{rem 310}
The following identity can be proved in almost the same way.
 \begin{eqnarray*}\lefteqn{b_n\ \calg[a_{n-1},\ldots, a_2,a_1,a_1,a_2,\ldots,a_n]} \\
&=& \calg[a_{n-1},\ldots, a_2,a_1]\calg[a_1,a_2,\ldots,a_n]+ 
 \calg[a_{n-1},\ldots, a_2]\calg[a_2,\ldots,a_n]
 .\end{eqnarray*}
\end{remark}
 
 
As a  consequence, we obtain a new proof of the following known result by specializing the weights of the snake graphs to the integer 1.
\begin{cor}
 \label{thm pal2}
 Let $[a_1,a_2,\ldots,a_n] = \cfrac{p_n}{q_n} $. Then 
\[ [a_n,\ldots, a_2,a_1,a_1,a_2,\ldots,a_n] =\frac{ p_n^2 +q_n^2}{p_{n-1}p_n+q_{n-1}q_n}.\]
Moreover, the expression on the right hand side is a reduced fraction.
\end{cor}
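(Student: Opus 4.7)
The plan is to read off numerator and denominator of the continued fraction $[a_n,\ldots,a_1,a_1,\ldots,a_n]$ from the snake graph via Theorem~\ref{thm1}, then compute these two matching numbers using Theorem~\ref{thm palsnake} and Remark~\ref{rem 310}.
By Theorem~\ref{thm1} applied to the palindromic continued fraction $[a_n,\ldots,a_1,a_1,\ldots,a_n]$, its reduced form is
\[\frac{m(\calg_\leftrightarrow)}{m(\calg[a_{n-1},\ldots,a_1,a_1,\ldots,a_n])}.\]
So it suffices to identify the top with $p_n^2+q_n^2$ and the bottom with $p_{n-1}p_n+q_{n-1}q_n$. The reducedness claim will then come for free from Theorem~\ref{thm1}.

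For the numerator, Theorem~\ref{thm palsnake} directly yields $m(\calg_\leftrightarrow)=m(\calg)^2+m(\calg')^2$ with $\calg=\calg[a_1,\ldots,a_n]$ and $\calg'=\calg[a_2,\ldots,a_n]$. Applying Theorem~\ref{thm1} once more, $m(\calg)=p_n$ and $m(\calg')=q_n$, giving $p_n^2+q_n^2$ as required. For the denominator, I will invoke the identity in Remark~\ref{rem 310}, specialized to all weights equal to $1$, so that the scalar edge $b_n$ becomes $1$ and the product of snake graphs becomes the product of their matching numbers:
\[m(\calg[a_{n-1},\ldots,a_1,a_1,\ldots,a_n]) \;=\; m(\calg[a_{n-1},\ldots,a_1])\,m(\calg[a_1,\ldots,a_n])\;+\;m(\calg[a_{n-1},\ldots,a_2])\,m(\calg[a_2,\ldots,a_n]).\]

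To finish I need to identify the reversed snake graphs with $p_{n-1}$ and $q_{n-1}$. By the corollary to Proposition~\ref{propiso} (reversing a continued fraction preserves its numerator), $m(\calg[a_{n-1},\ldots,a_1])=m(\calg[a_1,\ldots,a_{n-1}])=p_{n-1}$ and $m(\calg[a_{n-1},\ldots,a_2])=m(\calg[a_2,\ldots,a_{n-1}])=q_{n-1}$, whence the right-hand side becomes $p_{n-1}p_n+q_{n-1}q_n$. Combining with the numerator computation and dividing, we obtain the desired equality. As noted above, Theorem~\ref{thm1} guarantees the fraction is already reduced; alternatively, one could verify this directly from the Brahmagupta--Fibonacci identity $(p_n^2+q_n^2)(p_{n-1}^2+q_{n-1}^2)=(p_{n-1}p_n+q_{n-1}q_n)^2+(p_nq_{n-1}-p_{n-1}q_n)^2$ together with the classical relation $p_nq_{n-1}-p_{n-1}q_n=\pm 1$, but this is not necessary.

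The only delicate points I anticipate are (i) making sure Remark~\ref{rem 310} is stated with the correct boundary conventions so that the specialization $x_i=y_i=1$ genuinely collapses $b_n$ to $1$ and turns the snake graph products into products of matching numbers, and (ii) handling the edge case $n=1$, where $p_{n-1}$, $q_{n-1}$ refer to the conventional initial values $p_0=1$, $q_0=0$; a short separate check of $[a_1,a_1]=(a_1^2+1)/a_1$ dispatches this case. Beyond these bookkeeping issues, the proof is a direct substitution.
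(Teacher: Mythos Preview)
Your proof is correct and follows essentially the same approach as the paper: both express the left-hand side via Theorem~\ref{thm1}, identify the numerator using Theorem~\ref{thm palsnake}, and the denominator using Remark~\ref{rem 310}. Your version is in fact slightly more explicit, since you spell out the use of the reversal symmetry (the corollary to Proposition~\ref{propiso}) needed to pass from $m(\calg[a_{n-1},\ldots,a_1])$ to $p_{n-1}$, whereas the paper absorbs this step silently.
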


\begin{proof}
 The left hand side is equal to $\cfrac{m(\calg_{\leftrightarrow})}{m(\calg[a_{n-1},\ldots, a_2,a_1,a_1,a_2,\ldots,a_n])} $ and the right hand side is equal to 
 \[\cfrac{m(\calg)m(\calg)+m(\calg')m(\calg')}{  m( \calg[a_1,a_2,\ldots,a_{n-1}]) M( \calg)+m(\calg[a_2,\ldots,a_{n-1}])m( \calg')} \] 
 The numerators of these two expressions are equal by Theorem \ref{thm palsnake}, and the denominators are equal by Remark \ref{rem 310}.
\end{proof}
\begin{example}
%
$[2,1,3]=\cfrac{11}{4}$, $[2,1]= \cfrac{3}{1} $ and $[3,1,2,2,1,3]=\cfrac{137}{37} =\cfrac{11^2+4^2}{3\cdot 11+1\cdot 4}$.
\end{example}

\subsection{Sums of two relatively prime squares}
An integer $N$ is called a {\em sum of two relatively prime squares} if there exist integers $p>q\ge 1$ with $\gcd(p,q)=1$ such that $N=p^2+q^2$. 
We have   the following corollary.
\begin{cor}
 \label{cor squares}
 (a) If $N$ is a sum of two relatively prime squares then there exists a palindromic snake graph of even length  $\calg$ such that $m(\calg)=N$. 
\\
\indent (b) For each positive integer $N$, the number of ways one can write $N$ as a sum of two relatively prime squares is equal to one half of the number of palindromic snake graphs of even length with $N$ perfect matchings.\\
\indent (c)  For each positive integer $N$, the number of ways one can write $N$ as a sum of two relatively prime squares is equal to one half of the number of  palindromic continued fractions of even length with numerator $N$.
\end{cor}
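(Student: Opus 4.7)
The plan is to deduce everything by combining Theorem~\ref{thm1} with Theorem~\ref{thm palsnake}.

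For (a), given $N = p^2 + q^2$ with $p > q \geq 1$ and $\gcd(p, q) = 1$, I would expand $p/q$ as a positive continued fraction $[a_1, a_2, \ldots, a_n]$. By Theorem~\ref{thm1}, $p = m(\calg[a_1, \ldots, a_n])$ and $q = m(\calg[a_2, \ldots, a_n])$, so setting $\calg = \calg[a_1, \ldots, a_n]$ and applying Theorem~\ref{thm palsnake} gives $m(\calg_\leftrightarrow) = p^2 + q^2 = N$. Since $\calg_\leftrightarrow$ is palindromic of even length by construction, (a) follows.

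For (b) and (c), the plan is to construct a 2-to-1 correspondence from the palindromic objects in question onto the set of representations $N = p^2 + q^2$ with $p > q \geq 1$ coprime. Every palindromic continued fraction of even length is uniquely of the form $[a_n, \ldots, a_1, a_1, \ldots, a_n]$, i.e., the palindromification of a unique positive continued fraction $[a_1, \ldots, a_n]$, and its snake graph is the palindromic snake graph of even length $(\calg[a_1, \ldots, a_n])_\leftrightarrow$; by Theorem~\ref{thm pal1}, every palindromic snake graph of even length arises this way. Sending such a palindromic object to the pair $(p, q) = (m(\calg[a_1, \ldots, a_n]), m(\calg[a_2, \ldots, a_n]))$ then yields the desired map, with $\gcd(p, q) = 1$ by Theorem~\ref{thm1} and $p^2 + q^2 = N$ by Theorem~\ref{thm palsnake}.

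The 2-to-1 fibers arise because every reduced rational $p/q > 1$ has exactly two positive continued fraction expansions, $[a_1, \ldots, a_n]$ with $a_n \geq 2$ and $[a_1, \ldots, a_n - 1, 1]$, as recalled at the start of Section~\ref{sect 3}. Their palindromifications $[a_n, \ldots, a_1, a_1, \ldots, a_n]$ and $[1, a_n - 1, \ldots, a_1, a_1, \ldots, a_n - 1, 1]$ are distinct palindromic continued fractions of even length (of lengths $2n$ and $2n+2$), and I expect their associated snake graphs, although they share the same number of tiles, to be non-isomorphic -- distinguishable by the sign sequences near the two extreme tiles, equivalently by the sizes of the maximal straight subgraphs at the ends. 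This proves both (b) and (c) simultaneously.

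The main obstacle I anticipate is the final non-isomorphism claim for the two palindromic snake graphs arising from a common $(p, q)$: since they have the same number of tiles, the distinction must be extracted from the boundary sign pattern using the recipe~(\ref{eqsign}) of Section~\ref{sect 2}.
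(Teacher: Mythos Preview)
Your argument for (a) matches the paper's exactly. For (b) and (c) the paper simply cites the bijections of \cite[Theorem~4.1]{CS4}, whereas you spell out the two-to-one correspondence explicitly; the underlying idea is the same, and your version is more informative.

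There is, however, a genuine error in your justification for (b). The two palindromic snake graphs
\[
\calg[a_n,\ldots,a_1,a_1,\ldots,a_n]\qquad\text{and}\qquad \calg[1,a_n-1,\ldots,a_1,a_1,\ldots,a_n-1,1]
\]
are \emph{always isomorphic}, not non-isomorphic as you anticipate: by Proposition~\ref{propiso}(a) the flip at $y=x$ sends the first to $\calg[1,a_n-1,\ldots,a_1,a_1,\ldots,a_n]$, and this equals the second because $\calg[\ldots,a_n-1,1]=\calg[\ldots,a_n]$. The smallest case already shows this: for $p/q=2/1$ the two snake graphs $\calg[2,2]$ and $\calg[1,1,1,1]$ are the horizontal and vertical straight three-tile graphs, which are isomorphic.

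This does not actually break your argument, because what (b) counts is snake graphs in the sense of Section~\ref{sect 2}---planar objects given by an ordered sequence of tiles---not isomorphism classes. The two graphs above are distinct in this sense: an embedded snake graph determines its continued fraction up to the last-entry ambiguity only, and here the \emph{first} entries differ ($a_n\ge 2$ versus $1$). So replace your ``non-isomorphic'' claim by this observation and the proof of (b) goes through.
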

\begin{proof}
 Let  $p>q\ge 1$ be such that $N=p^2+q^2$ and $\gcd(p,q)=1$, and let  $[a_1,a_2,\ldots,a_n] = \frac{p}{q} $ be the continued fraction of the quotient.
 Combining Theorem~\ref{thm pal2} and Theorem \ref{thm1}, we see that $\calg[a_n,\ldots, a_2,a_1,a_1,a_2,\ldots,a_n]$ has $N$ perfect matchings. This shows part (a). Parts (b) and (c) follow from  the bijections of \cite[Theorem 4.1]{CS4}.
 \end{proof}

\begin{example} The integer $5$ can be written uniquely as sum of two relatively prime squares as $5=2^2+1^2$. 
 The even length palindromic continued fractions  with numerator $5$ are $[2,2]$ and $[1,1,1,1]$, corresponding to the snake graphs \scalebox{0.4}{
\begingroup%
  \makeatletter%
  \providecommand\color[2][]{%
    \errmessage{(Inkscape) Color is used for the text in Inkscape, but the package 'color.sty' is not loaded}%
    \renewcommand\color[2][]{}%
  }%
  \providecommand\transparent[1]{%
    \errmessage{(Inkscape) Transparency is used (non-zero) for the text in Inkscape, but the package 'transparent.sty' is not loaded}%
    \renewcommand\transparent[1]{}%
  }%
  \providecommand\rotatebox[2]{#2}%
  \ifx\svgwidth\undefined%
    \setlength{\unitlength}{81.603112bp}%
    \ifx\svgscale\undefined%
      \relax%
    \else%
      \setlength{\unitlength}{\unitlength * \real{\svgscale}}%
    \fi%
  \else%
    \setlength{\unitlength}{\svgwidth}%
  \fi%
  \global\let\svgwidth\undefined%
  \global\let\svgscale\undefined%
  \makeatother%
  \begin{picture}(1,0.60781996)%
    \put(0,0){\includegraphics[width=\unitlength]{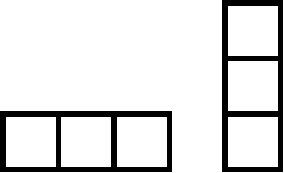}}%
  \end{picture}%
\endgroup%
}\,, respectively.
\end{example}

 \subsection{Odd palindromes}
 For odd length palindromic continued  fractions we have a similar result describing the numerator as a difference of two squares.
 
\begin{thm}
 \label{thm oddpal}
 Let $[a_1,a_2,\ldots, a_n]=\cfrac{p_n}{q_n}$ and $[a_2,a_3,\ldots, a_n]=\cfrac{q_n}{r_n}$. Then
 \[ [a_n,\ldots, a_2,a_1,a_2,\ldots,a_n] =\frac{ p_n^2 -r_{n}^2}{p_{n-1}p_n-r_{n-1}r_n}.\]

\end{thm}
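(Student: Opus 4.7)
The plan is to follow the strategy of Theorem~\ref{thm palsnake}, applying a grafting identity from \cite{CS4} to cut the odd palindromic snake graph in the middle, and then reconcile the resulting fraction with the claimed expression via the standard determinantal identities for convergents.

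First, I will apply \cite[Theorem 5.1]{CS4} at the position between the central $a_1$ and the following $a_2$ in $\calg[a_n,\ldots,a_1,a_2,\ldots,a_n]$ to obtain a snake graph identity of the form
\[(\text{single edge})\cdot\calg[a_n,\ldots,a_1,a_2,\ldots,a_n]=\calg[a_n,\ldots,a_1]\,\calg[a_2,\ldots,a_n]+\calg[a_n,\ldots,a_2]\,\calg[a_3,\ldots,a_n].\]
Counting perfect matchings, using Theorem~\ref{thm1} and the fact that reversal preserves the numerator of a continued fraction (Proposition~\ref{propiso}), the right-hand side evaluates to $p_nq_n+q_nr_n=q_n(p_n+r_n)$. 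An entirely analogous application of the grafting identity to the denominator snake graph $\calg[a_{n-1},\ldots,a_1,a_2,\ldots,a_n]$, in the spirit of Remark~\ref{rem 310}, produces $p_{n-1}q_n+q_{n-1}r_n$. Hence
\[[a_n,\ldots,a_1,\ldots,a_n]=\frac{q_n(p_n+r_n)}{p_{n-1}q_n+q_{n-1}r_n}.\]

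Finally, I will verify that this fraction equals $(p_n^2-r_n^2)/(p_{n-1}p_n-r_{n-1}r_n)$. Writing $p_n^2-r_n^2=(p_n-r_n)(p_n+r_n)$ and cancelling the common factor $(p_n+r_n)$, it suffices to show
\[q_n(p_{n-1}p_n-r_{n-1}r_n)=(p_n-r_n)(p_{n-1}q_n+q_{n-1}r_n).\]
Expanding both sides, all $r_n$-free and pure-$r_n^2$ contributions cancel, leaving the single identity
\[(p_{n-1}q_n-p_nq_{n-1})+(q_{n-1}r_n-q_nr_{n-1})=0,\]
which follows at once from the determinantal relations $p_nq_{n-1}-p_{n-1}q_n=(-1)^n$ and $q_nr_{n-1}-q_{n-1}r_n=(-1)^{n-1}$ applied to the convergents of $[a_1,\ldots,a_n]$ and $[a_2,\ldots,a_n]$ respectively; these two quantities are negatives of one another, so their sum vanishes.

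The main obstacle I anticipate is careful bookkeeping in the first step: verifying that the grafting formula of \cite[Theorem 5.1]{CS4} applies at the chosen cut and tracking the auxiliary single-edge factor. Unlike the even palindrome, where the split occurs naturally between two identical central blocks, here the cut lies at the boundary between the central $a_1$-block and an adjacent $a_2$-block, and one must check that the local sign pattern matches the hypothesis of the grafting formula. Once this is done, all remaining steps are algebraic manipulations of standard convergent identities.
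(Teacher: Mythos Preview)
Your argument is correct but takes a different route from the paper. The paper applies part~(b) of \cite[Theorem~5.1]{CS4} (the subtractive version) with the cut centred on the middle block $a_1$: this yields directly
\[
m(\calg[a_n,\ldots,a_1,\ldots,a_n])=\frac{p_n^2-r_n^2}{a_1},\qquad
m(\calg[a_{n-1},\ldots,a_1,\ldots,a_n])=\frac{p_{n-1}p_n-r_{n-1}r_n}{a_1},
\]
so the claimed quotient drops out with no further algebra, and the common factor $a_1$ appearing in Remark~3.14 is visible as a by-product. You instead use the additive version of the grafting identity at the asymmetric cut between $a_1$ and the following $a_2$, obtaining the (equally valid) expression $q_n(p_n+r_n)\big/\bigl(p_{n-1}q_n+q_{n-1}r_n\bigr)$, and then reconcile it with the stated form via the determinantal identities for convergents. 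Your approach parallels the even-palindrome proof more closely and avoids the overlapping cut, at the cost of an extra algebraic step; the paper's approach is one line shorter and simultaneously proves the remark about the greatest common divisor.
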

\begin{remark}
 The fraction on the right hand side of the equation is not reduced. The greatest common divisor of the numerator and the denominator is $a_1$. 
\end{remark}
 
\begin{proof} Because of Theorem~\ref{thm1},
we have \[[a_n,\ldots,a_2,a_1,a_2,\ldots,a_n]=\cfrac{m(\calg[a_n,\ldots,a_2,a_1,a_2,\ldots,a_n])}{m(\calg[a_{n-1},\ldots,a_2,a_1,a_2,\ldots,a_n])}.\]
 Using part(b) of \cite[Theorem 5.1]{CS4}
  with $i=n$ and $j=0$, we see that the numerator is equal to 
 \[\cfrac{m(\calg[a_n,\ldots,a_1])\,m(\calg[a_1,\ldots,a_n]) - m(\calg[a_n,\ldots,a_{3}])\,m(\calg[a_{3},\ldots,a_n]) }
{m(\calg{[a_1]}) }
 \]
 which, by symmetry, is equal to
 \[\cfrac{p_n^2 - r_n^2 }
{a_1}.
 \]
 Moreover, this is an integer, since it is the number of perfect matchings of a snake graph.
 On the other hand, again using \cite[Theorem 5.1]{CS4}, 
but now with $i=n-1$ and $j=0$, the denominator is equal to 
  \[\cfrac{m(\calg[a_{n-1},\ldots,a_1])\,m(\calg[a_1,\ldots,a_n]) - m(\calg[a_{n-1},\ldots,a_{3}])\,m(\calg[a_{3},\ldots,a_n]) }
{m(\calg{[a_1]}) }
 \]
 which is equal to
 \[\cfrac{p_{n-1}p_n- r_{n-1}r_n}
{a_1}.
 \]
This proves the theorem.
\end{proof}
\begin{example}
 Let $[a_1,a_2,a_3]=[2,1,3]=11/4=p_n/q_n$, and thus $q_n/r_n=[1,3]=4/3$, $p_{n-1}=3$ and $r_{n-1}=1$. Then  $[3,1,2,1,3] = \frac{56}{15}$  and on the other hand, $\frac{11^2-3^2}{ 3\times11-1\times3} = \frac{112}{30}=\frac{56}{15}$.
\end{example}

\subsection{Squares and almost palindromes} One can also realize the square of the numerator of a continued fraction $[a_1,a_2,\ldots,a_n]$  as the numerator of a continued fraction that is almost palindromic.  
\begin{thm}
\label{thm squares} Let $\cfa=\frac{p}{q}.$    Then \[ [a_1,\ldots,a_{n-1},a_n+1,a_n-1,a_{n-1},\ldots, a_1]=\frac{p^2}{pq+(-1)^n}.\]
\end{thm}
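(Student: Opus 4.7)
The cleanest route is a direct evaluation of the continued fraction via the classical convergent recursions, without going through snake graph matchings. The plan is to set $\gamma := [a_n+1,a_n-1,a_{n-1},\dots,a_2,a_1]$, regard the target continued fraction as the formal expression $\beta = [a_1,\dots,a_{n-1},\gamma]$, and evaluate $\beta$ via the standard Möbius formula $\beta = (p_{n-1}\gamma+p_{n-2})/(q_{n-1}\gamma+q_{n-2})$ that attaches a non-integer tail to a finite continued fraction.

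First I would compute $\gamma$ explicitly. Using the reversal identity $[a_{n-1},\dots,a_1]=p_{n-1}/p_{n-2}$, the inner block $[a_n-1,a_{n-1},\dots,a_1]$ equals $(a_n-1)+p_{n-2}/p_{n-1}$, which simplifies via the recursion $p_n=a_np_{n-1}+p_{n-2}$ to $(p_n-p_{n-1})/p_{n-1}$. Adding the reciprocal of this to $a_n+1$ and applying the same recursion once more produces the closed form $\gamma = (a_np_n+p_{n-2})/(p_n-p_{n-1})$.

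Next I would substitute this $\gamma$ into the Möbius formula and clear the common factor $(p_n-p_{n-1})$. The numerator of $\beta$ becomes $p_{n-1}(a_np_n+p_{n-2})+p_{n-2}(p_n-p_{n-1})$, which collapses telescopically to $p_n(a_np_{n-1}+p_{n-2}) = p_n^2$. The denominator becomes $q_{n-1}(a_np_n+p_{n-2})+q_{n-2}(p_n-p_{n-1}) = p_n(a_nq_{n-1}+q_{n-2})+(p_{n-2}q_{n-1}-p_{n-1}q_{n-2}) = p_nq_n + (p_{n-2}q_{n-1}-p_{n-1}q_{n-2})$.

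The main obstacle, and precisely the source of the $(-1)^n$ in the statement, is identifying the remaining bracket. This is the classical determinant identity for consecutive convergents, $p_kq_{k-1}-p_{k-1}q_k=(-1)^k$, applied with $k=n-1$ and rearranged to give $p_{n-2}q_{n-1}-p_{n-1}q_{n-2}=(-1)^n$. Substituting yields $\beta = p_n^2/(p_nq_n+(-1)^n) = p^2/(pq+(-1)^n)$, which is the claim.
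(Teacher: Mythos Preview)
Your proof is correct and takes a genuinely different route from the paper. The paper proves the identity inside its snake-graph framework: it applies the grafting-with-a-single-edge formula of \cite[Section~3.3, case~3]{CS2} to split the numerator of the left-hand side as $\caln[a_1,\ldots,a_n]\,\caln[a_n-1,\ldots,a_1]+\caln[a_1,\ldots,a_{n-1}]\,\caln[a_n,\ldots,a_1]$, then factors out $p$ and uses the convergent recursion to recognise the remaining factor as $p$ again; for the denominator it repeats the grafting step and then invokes \cite[Theorem~5.2(b)]{CS4} to produce the correction term $(-1)^n$. Your argument bypasses the snake-graph calculus entirely, treating the tail $\gamma=[a_n+1,a_n-1,a_{n-1},\ldots,a_1]$ as a real number, evaluating it via the reversal identity $[a_{n-1},\ldots,a_1]=p_{n-1}/p_{n-2}$, and then feeding $\gamma$ into the M\"obius formula for $[a_1,\ldots,a_{n-1},x]$; the $(-1)^n$ arises from the classical determinant relation $p_{k}q_{k-1}-p_{k-1}q_{k}=(-1)^{k}$ rather than from a snake-graph identity. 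Your approach is more elementary and entirely self-contained, while the paper's approach keeps the result situated within the snake-graph calculus that is the theme of the article. One small point worth noting: your use of $p_{n-2},q_{n-2}$ implicitly assumes the standard conventions $p_0=1$, $q_0=0$ (and $p_{-1}=0$, $q_{-1}=1$) to cover the case $n=1$; with those in place the argument goes through uniformly.
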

\begin{proof} It follows from  the  formula for grafting with a single edge of \cite[section 3.3 case 3]{CS2}  that the numerator of the left hand side is equal to 
 \[ \caln[a_1,\ldots,a_{n-1},a_n]\,\caln[a_n-1,a_{n-1},\ldots, a_1]+\caln[a_1,\ldots, a_{n-1}]\, \caln[a_n,a_{n-1},\ldots,a_1],\]
 which can be written as
 \[p\,( \caln[a_1,\ldots, a_{n-1},a_{n}-1]+\caln[a_1,\ldots, a_{n-1}]),\]
 and from the recursive definition of convergents it follows that the term in parentheses is also equal to $p$. 
 This shows that the numerators on both sides agree.
 
 The denominator on the left hand side is equal to $\caln[a_2,\ldots,a_{n-1},a_n+1,a_n-1,a_{n-1},\ldots, a_1]$ and using  the grafting with a single edge formula again, this is equal to 
 \[\caln [a_2,\ldots,a_{n-1},a_n]\,\caln[a_n-1,a_{n-1},\ldots, a_1]+\caln[a_2,\ldots, a_{n-1}]\,\caln[a_n,a_{n-1},\ldots,a_1].\] 
 Now using part (b) of \cite[Theorem 5.2]{CS4} 
 with $i=2$ and $i+j=n-1$ on the second summand, we see that the above expression is equal to 
 \[\caln [a_2,\ldots,a_{n-1},a_n]\,\caln[a_n-1,a_{n-1},\ldots, a_1]+\caln[a_1,\ldots, a_{n-1}]\,\caln[a_2,\ldots,a_n] +(-1)^n\]
 \[=\caln [a_2,\ldots,a_{n-1},a_n]\left( \caln[a_1,\ldots,a_{n-1},a_n-1]+\caln[a_1,\ldots, a_{n-1}]\right) +(-1)^n \]
  \[=qp  +(-1)^n .\qedhere\]
\end{proof}

\begin{example}
$\frac{11}{4}=[2,1,3]$
and 
$[2,1,4,2,1,2]=\frac{121}{43}=\frac{11^2}{4\cdot11-1}$.\end{example}

\begin{remark} Removing the first and last coefficient of a palindromic continued fraction gives a new palindromic continued fraction. The relation between the two is well-known, see \cite[Section 9]{V}. More precisely, let $\frac{p}{q}=\cfa$ then $\cfa$ is palindromic if and only if $p$ divides $q^2+(-1)^n$. Moreover, in this case, the quotient $\frac{q^2+(-1)^n}{p}$ is the numerator of the palindromic continued fraction $[a_2,a_3,\ldots,a_{n-1}]$.
\end{remark}

\begin{example}
\[\begin{array}{ccccccccc}
 [2,1,1,2,2,1,1,2]=194/75
&\qquad  & \frac{75^2+1}{194} = 29 & \\
 \\
 
 [1,1,2,2,1,1]=29/17  && \frac{17^2+1}{29}=10\\
 \\
 
 [1,2,2,1]=10/7  & &\frac{7^2+1}{10}=5\\
\\

 [2,2]=5/2  \\
\end{array}\]
\end{example}

 \section{Markov numbers}\label{sect 4}
 A triple of positive integers $(m_1,m_2,m_3)$
is called a {\em Markov triple} if it is a solution to the Markov equation 
\[x^2+y^2+z^2=3xyz.\]
 An integer is called  a {\em Markov number} if it is a member of a Markov triple. Frobenius conjectured in 1913 that the largest number in a Markov triple determines the other two \cite{F}. This is known as the uniqueness conjecture for Markov numbers. For an account of the history and many attempts of solving this conjecture see the monograph \cite{Aigner}. 
 
 It is known that every other Fibonacci number is a Markov number and so is every other Pell number.
 Examples of Markov triples are 
 \begin{equation}
 (1,1,1),(2,1,1),(2,5,1),(13,5,1),(13,5,194), (2897,5,194).
 \label{Markov seqn}
 \end{equation}
 
 In this sequence each triple is obtained from the previous one by the exchange relation $m_i' m_i=m_j^2+m_k^2$ or equivalently $m_i'=3m_jm_k-m_i$.

\subsection{Markov snake graphs}
\label{sec::MSG}

It has been shown in \cite{BBH,Propp} that the Markov triples  are related to the clusters of the cluster algebra associated to the torus with one puncture.  This relation is given explicitly by sending a cluster to the triple obtained by setting the three initial cluster variables equal to 1. The sequence (\ref{Markov seqn}) of Markov triples corresponds to a sequence of mutations in the cluster algebra. Since the cluster variables are computed by snake graphs, we can interpret the Markov numbers in terms of snake graphs.

Given a slope $p/q$ with $p<q$, $\gcd(p,q)=1$, there is the associated Markov number $m_{p/q}$. Take the torus with one puncture and usual covering in the plane such that the cluster variables $x_1,x_2$ correspond to the standard basis vectors $e_1,e_2$   of the plane. Let $x_3$ correspond to the line segment between the point $(1,0)$ and $(0,1)$. The line segment from $(0,0)$ to $(q,p)$ represents the cluster variable whose numerator has $m_{p/q}$ terms counting multiplicities, see Figure \ref{linepqch}.
 This line has slope $p/q$ and has a crossing pattern with the standard grid of the following form
\[\begin{array}{ccccccccc} \underbrace{x_2,\cdots,x_2,}&x_1,&  \underbrace{x_2,\cdots,x_2,}&x_1,&\cdots &x_1,& \underbrace{x_2,\cdots,x_2}\\
v_1&&v_2&&\cdots &&v_p
\end{array}\]
meaning that the line crosses $v_1$ vertical edges, then one horizontal edge, then $v_2$ vertical edges, and so on.
The $v_i$ are computed using the floor function as follows.

\[\begin{array}{rcll}
v_1&=& \displaystyle \left \lfloor \frac{p}{q} \right \rfloor \\
v_i&=& \displaystyle \left \lfloor \frac{iq}{p} \right \rfloor  -  ( v_1 + v_2 + \cdots + v_{i-1} ) \quad 
&\textup{for $i =2,\ldots, p-1$;}\\[10pt]
v_p&=& \displaystyle q -1- ( v_1 + v_2 + \cdots + v_{p-1} ) .\end{array}\]

Note that $|v_i-v_j| \le 1$, and $v_i=v_{p+1-i}$. Moreover, since the points $(0,0), (q,p)$ are the only lattice points on the line segment we have $v_1\le v_i$, for all $i$
and $v_1+\cdots+v_{1+j} \le v_{i}+\cdots v_{i+j}$ for all $i,j$.
The total number of crossings with vertical edges is $v_1+v_2+\cdots+v_p=q-1$ and the total number of crossings with horizontal edges is $p-1$. 

In the triangulation corresponding to the initial cluster $(x_1,x_2,x_3)$, the line also crosses the diagonals $x_3$. In fact, every other crossing is with $x_3$. 

Therefore, the corresponding snake graph $\calg$ has $(p-1)$ tiles for the crossings with $x_1$, $(q-1)$ tiles  for the crossings with $x_2$ and $p+q-1$ tiles  for the crossings with $x_3$; and its shape is given by the continued fraction \[ \begin{array}{ccccccccc} [2,&\underbrace{1,\cdots, 1,}&2,2,&  \underbrace{1,\cdots, 1,}&2,2, &\cdots &2,2,& \underbrace{1,\cdots,1,}&2]\\
&2(v_1-1)&&2(v_2-1)&&\cdots &&2(v_p-1)
\end{array}
\]
which has $2*(q-p-1)$ times the coefficient 1 and $2*p$ times the coefficient 2, for a total of $2q-2$ coefficients. Note that  $v_i=v_{p+1-i}$. 
We list the initial segments of continued fractions corresponding to small slope in Table \ref{table 1}.
\begin{table}\caption{Initial segments of continued fractions of Markov numbers according to their slope}
\begin{tabular} {|r|l|}
\hline
slope&continued fraction\\
\hline
$0<p/q<1/2$& [2,1,1\ldots]\\
$p/q=1/2$&[2,2]\\
$1/2<p/q<2/3$& [2,2,2,1,1\ldots]\\
$p/q=2/3$&[2,2,2,2]\\
$2/3<p/q<3/4$& [2,2,2,2,2,1,1\ldots]\\
$p/q=3/4$&[2,2,2,2,2,2]\\
$3/4<p/q<4/5$& [2,2,2,2,2,2,2,1,1\ldots]\\
[1pt]
\hline 
\end{tabular}\label{table 1}
\end{table}

The above discussion gives a new proof of the following result which is due to Frobenius.

\begin{thm}\label{thm markov}
\cite[Section 10]{F} 
 Every Markov number $m_{p/q}$ is the numerator of a palindromic continued fraction $[a_{n},\ldots, a_2,a_1,a_1,a_2,\ldots,a_n]$ of even length such that 
 \begin{enumerate}
\item $a_i\in\{1,2\}$, $a_n=2$;
\item  If $p+1=q$ then  $n=p$ and $a_i=2$ for all $i$;
\item If $p+1<q$ then $\frac{c-1}{c}<\frac{p}{q}<\frac{c}{c+1}$,  for a unique positive integer $c$ and  
\begin{enumerate}
\item 
there are   at most $p+1$  subsequences of $2$s; the first and last are  of odd  length $2c-1$ and all others  are  of even length $2c$ or $2c+2$;
\item there are  at most $p$ maximal subsequences of $1$s; each of these is of even length $2(\nu_i-1)$ and $|\nu_i-\nu_j|\le 1$ for all $i\ne j$.
\end{enumerate}
\end{enumerate}
Moreover, the resulting map \[p/q \mapsto [a_{n},\ldots, a_2,a_1,a_1,a_2,\ldots,a_n]\] from rational numbers between 0 and 1 to palindromic continued fractions is injective.
\end{thm}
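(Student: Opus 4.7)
The plan is to derive the entire theorem as a direct consequence of the explicit construction of the Markov snake graph recalled at the start of Section~\ref{sec::MSG}, together with Theorem~\ref{thm pal1}. The continued fraction
\[[2, \underbrace{1,\ldots,1}_{2(v_1-1)}, 2,2, \underbrace{1,\ldots,1}_{2(v_2-1)}, 2,2,\ldots,2,2, \underbrace{1,\ldots,1}_{2(v_p-1)}, 2]\]
has total length $2p + 2\sum_{i=1}^{p}(v_i-1) = 2p + 2(q-1-p) = 2(q-1)$, so it is of even length with $n=q-1$, every coefficient lies in $\{1,2\}$, and the last coefficient is $2$. The symmetry $v_i = v_{p+1-i}$ established in Section~\ref{sec::MSG} makes this sequence palindromic, which by Theorem~\ref{thm pal1} corresponds to the snake graph being rotationally symmetric at its center tile, confirming the palindromic form $[a_n,\ldots,a_1,a_1,\ldots,a_n]$ with $a_n=2$.

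For item (2), when $p+1=q$ the identity $\sum_{i=1}^{p} v_i = q-1 = p$ together with $v_i \ge 1$ forces $v_i = 1$ for every $i$. Every 1-block then has length zero, and the continued fraction collapses to $[2,2,\ldots,2]$ of length $2p$, yielding $n=p$ and $a_i=2$ for all $i$.

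For item (3), I would first rewrite the defining inequality $\tfrac{c-1}{c}<\tfrac{p}{q}<\tfrac{c}{c+1}$ as $p/(q-p) < c < q/(q-p)$, an open interval of length $1$, which makes uniqueness of $c$ transparent. Part (b) is then immediate from the construction: each of the at-most-$p$ maximal subsequences of $1$'s has length $2(v_i-1)$, which is even, and $|v_i - v_j|\le 1$ was already noted in Section~\ref{sec::MSG}. For part (a), the key step is analyzing which $v_i$ attain the minimum value: using $v_i = \lfloor iq/p\rfloor - \lfloor (i-1)q/p\rfloor$ together with the bound $\tfrac{c-1}{c}<\tfrac{p}{q}<\tfrac{c}{c+1}$, I would show $v_1 = v_2 = \cdots = v_{c-1} = 1$ with $v_c \ge 2$, and symmetrically at the right end. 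Consequently the leftmost potential 2-block (a single $2$) merges with the following $c-1$ empty 1-blocks and the adjacent interior ``$2,2$'' blocks to produce a single block of odd length $1+2(c-1) = 2c-1$; the analogous statement holds at the right end. Any further merging of interior ``$2,2$'' blocks occurs exactly at the other interior positions where $v_i=1$, and one tracks the even interior block lengths from this pattern.

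The main obstacle is the combinatorial bookkeeping in item (3): translating the rational inequality on $p/q$ into the precise pattern of $1$'s and larger values in the sequence $v_1,\ldots,v_p$, and counting how consecutive merges produce each possible block length. Finally, injectivity of the map $p/q \mapsto [a_n,\ldots,a_1,a_1,\ldots,a_n]$ is transparent from the construction: the palindromic continued fraction determines the sequence $v_1,\ldots,v_p$ (by reading off the lengths $2(v_i-1)$ of 1-blocks together with the merging data at the endpoints), from which $p$ and $q = 1+\sum v_i$ are recovered, and since $\gcd(p,q)=1$ is built in, the slope $p/q$ is uniquely reconstructed.
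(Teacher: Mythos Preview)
Your proposal is correct and follows essentially the same route as the paper: the paper's entire proof is the sentence ``The above discussion gives a new proof of the following result which is due to Frobenius,'' referring to the explicit construction of the Markov snake graph and its continued fraction in Section~\ref{sec::MSG}, which is precisely what you unpack. Your additional elaboration on items (2) and (3) and the injectivity argument goes slightly beyond what the paper spells out, but the underlying strategy---read everything off the displayed continued fraction built from the $v_i$---is identical, and the ``combinatorial bookkeeping'' you flag for item (3)(a) is likewise left implicit in the paper.
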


The snake graphs obtained from a line on the once-punctured torus  by the above procedure are called \emph{Markov snake graphs}. These have first appeared in \cite{Propp}. In the textbook \cite{Aigner} these graphs are called domino graphs.
We can reformulate Theorem \ref{thm markov} as follows.
\begin{cor}
 Every Markov snake graph is rotationally symmetric at its center tile. Moreover
 \begin{enumerate}
\item the snake graph has exactly $p$ horizontal segments  each of which has exactly  $2(\nu_i-1)+3$ tiles, and $|\nu_i-\nu_j|\le 1$ for all $i\ne j$; 
\item the snake graph has exactly $p-1$ vertical segments each of which has exactly 3 tiles.
\end{enumerate}

\end{cor}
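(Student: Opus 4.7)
Both parts of the corollary will follow from the explicit Markov continued fraction
\[\pi_{p/q}=[2,\underbrace{1,\ldots,1}_{2(v_1-1)},2,2,\underbrace{1,\ldots,1}_{2(v_2-1)},2,2,\ldots,2,2,\underbrace{1,\ldots,1}_{2(v_p-1)},2]\]
recorded just before Theorem~\ref{thm markov}, together with the sign-sequence construction of $\calg[a_1,\ldots,a_n]$ recalled in Section~\ref{sect 2}. Rotational symmetry is immediate: since $v_i=v_{p+1-i}$, the sequence $\pi_{p/q}$ is palindromic of length $2(q-1)$, which is even, so Theorem~\ref{thm pal1} applies to its snake graph.

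For the structural claims, the plan is to read off the east/north transition sequence of the snake graph from the block decomposition of $\pi_{p/q}$. A direct sign-function check shows that an interior tile $G_i$ is collinear with both of its neighbors precisely when $\sgn(e_{i-1})\neq\sgn(e_i)$, i.e.\ precisely when $i=\ell_j$ for some $j$; otherwise $G_i$ is a corner where the direction changes. A coefficient $1$ contributes no interior corner, while each interior coefficient $2$ contributes exactly one. In $\pi_{p/q}$ the only interior $2$s are the $p-1$ middle pairs ``$2,2$'', so the Markov snake graph has exactly $2(p-1)$ corners, splitting the transition sequence into $2p-1$ alternating east-runs and north-runs.

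Next I pin down the orientation. Since $a_1=2$, the signs of $e_0$ and $e_1$ agree, and with $e_0$ the south edge of $\calgSW$, this forces $e_1$ to be the east edge of $G_1$, so the first transition is east; by the rotational symmetry just established, the last transition is also east. Hence the sequence begins and ends with an east-run, giving $p$ horizontal segments and $p-1$ vertical segments. A direct bookkeeping on the indices $\ell_j$ along $\pi_{p/q}$ then shows that each middle pair ``$2,2$'' produces two corners at tile indices differing by exactly $2$, which gives a north-run of length $2$ and a vertical segment of $3$ tiles, while the east-runs between consecutive pairs (and at the two ends) have lengths $2v_1,2v_2,\ldots,2v_p$, yielding horizontal segments of $2v_i+1=2(v_i-1)+3$ tiles. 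The bound $|v_i-v_j|\le 1$ has already been recorded in the discussion preceding Theorem~\ref{thm markov}.

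The main technical obstacle is this last bookkeeping step, where the positions $\ell_j$ have to be tracked carefully through the alternating $1$-blocks and $2$-blocks of $\pi_{p/q}$ in order to pin down the corner-tile indices exactly and confirm the predicted run lengths $2v_i$ and $2$.
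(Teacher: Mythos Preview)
Your proposal is correct and follows the same route the paper implicitly takes: the paper offers no separate proof, treating the corollary as an immediate reformulation of the explicit Markov continued fraction displayed just before Theorem~\ref{thm markov} together with Theorem~\ref{thm pal1}. You simply make that translation explicit---identifying the straight tiles as the $G_{\ell_j}$, counting the $2(p-1)$ corners from the interior $2$s, and reading off the run lengths $2v_i$ and $2$---which is exactly the bookkeeping the paper leaves to the reader.
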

As a direct consequence we obtain the following.

\begin{cor}\label{cor 43}
 Every Markov number, except 1 and 2, is a sum of two relatively prime squares.
\end{cor}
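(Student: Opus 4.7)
The plan is to apply Theorem \ref{thm palsnake} (the palindromification identity) directly to the palindromic continued fraction expansion of the Markov number provided by Theorem \ref{thm markov}. Let $m_{p/q}$ be a Markov number with $m_{p/q}\notin\{1,2\}$; I will produce an explicit representation $m_{p/q}=A^2+B^2$ with $\gcd(A,B)=1$ and $A>B\ge 1$.

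First I would invoke Theorem \ref{thm markov} to write
\[
m_{p/q}=\text{numerator of }[a_n,\ldots,a_2,a_1,a_1,a_2,\ldots,a_n]
\]
for some sequence with $a_i\in\{1,2\}$ and $a_n=2$. Setting $\calg=\calg[a_1,a_2,\ldots,a_n]$, the associated Markov snake graph is exactly the palindromification $\calg_\leftrightarrow$, so by Theorem \ref{thm1} I get $m_{p/q}=m(\calg_\leftrightarrow)$. Next I apply Theorem \ref{thm palsnake} with $\calg'=\calg[a_2,\ldots,a_n]$ to obtain
\[
m_{p/q}=m(\calg_\leftrightarrow)=m(\calg)^2+m(\calg')^2.
\]
The coprimality $\gcd(m(\calg),m(\calg'))=1$ is then immediate from Theorem \ref{thm1}, since the quotient $m(\calg)/m(\calg')=[a_1,a_2,\ldots,a_n]$ is already displayed there as a reduced fraction.

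The only remaining issue is checking that the pair $(m(\calg),m(\calg'))$ genuinely has the form required by the definition of ``sum of two relatively prime squares,'' namely that $m(\calg)>m(\calg')\ge 1$. Positivity of both numbers is automatic, since each counts perfect matchings of a snake graph (with the empty graph contributing the empty matching when $n=1$). The strict inequality $m(\calg)>m(\calg')$ is equivalent to $[a_1,\ldots,a_n]>1$; the only Markov situations where this degenerates are $n=0$, corresponding to $m_{p/q}=1$, and $n=1$ with $a_1=1$, corresponding to $m_{p/q}=2$, and these are precisely the two cases excluded in the statement.

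I do not expect a genuine obstacle here: the heavy lifting has already been done in Theorems \ref{thm1}, \ref{thm palsnake} and \ref{thm markov}, and the corollary is essentially a direct assembly. The one point requiring mild care is the bookkeeping that identifies the excluded Markov numbers $1$ and $2$ with exactly the degenerate cases of the palindromification construction; I would handle this by simply tabulating the small cases $n\le 1$ against the Markov triples listed in \eqref{Markov seqn}.
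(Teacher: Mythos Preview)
Your proof is correct and follows essentially the same route as the paper. The paper's one-line proof cites Theorems~\ref{thm markov} and~\ref{thm pal2}; you instead invoke Theorem~\ref{thm markov} together with Theorem~\ref{thm palsnake} and Theorem~\ref{thm1}, which is simply Corollary~\ref{thm pal2} unpacked into its two ingredients (the sum-of-squares identity and the reducedness of the fraction), so the arguments coincide.
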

\begin{proof}
 This follows directly from Theorems \ref{thm markov}  and \ref{thm pal2}.
\end{proof}

In general, the decomposition of an integer as a sum of two relatively prime squares is not unique. The smallest\footnote{Note that $50=7^2+1^2=5^2+5^2$, however the expression $5^2+5^2$ is not a sum of two relatively prime squares since $\gcd(5,5)\ne 1$.}
 example is the integer 65 which is $8^2+1^2$ and also $7^2+4^2$, and on the other hand 65 is the numerator of the continued fractions $[8,8]$ and $[3,1,1,1,1,3]$.  The smallest example among the  Markov numbers is the Fibonacci number 610, which is $23^2+9^2$ and also $21^2+13^2$. Note that $21/13=[1,1,1,1,1,2]$ and its palindromification  $[2,1,1,1,1,1,1,1,1,1,1,2]$ is a Markov snake graph (corresponding to the slope $1/7$). On the other hand, $23/9=[2,1,1,4]$ and its palindromification $[4,1,1,2,2,1,1,4]$ is not Markov.
 
We can sharpen Corollary \ref{cor 43} as follows. Whenever $b/a=\cfa$, we use the notation $\calg(b/a) $ for the snake graph $\calg\cfa$, and we call the snake graph $\calg[a_n,\ldots,a_2,a_1,a_1,a_2,\ldots,a_n]$ the palindromification of $\calg(b/a)$.

\begin{cor}\label{cor 44}
Let $m>2$ be a Markov number. Then there exist positive integers $a<b$ with $\gcd(a,b)=1$ such that $m=a^2+b^2$, $2a \le b < 3a$ and

(a) the palindromification of the snake graph $\calg(b/a)$  is a Markov snake graph;

(b) the continued fraction expansion of the quotient $b/a$ contains only $1s$ and $2s$. 

\end{cor}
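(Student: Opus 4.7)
The plan is to read off the decomposition $m=a^2+b^2$ directly from the palindromic continued fraction description of $m$ in Theorem~\ref{thm markov} and to invoke the sum-of-squares identity of Theorem~\ref{thm palsnake}.

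First I would apply Theorem~\ref{thm markov} to write $m$ as the numerator of a palindromic continued fraction $[a_n,\ldots,a_1,a_1,\ldots,a_n]$ of even length $2n$ with all $a_i\in\{1,2\}$ and $a_n=2$. By the construction of Section~\ref{sec::MSG}, the associated snake graph $\calg_{\leftrightarrow}:=\calg[a_n,\ldots,a_1,a_1,\ldots,a_n]$ is the Markov snake graph of $m$, and it is the palindromification of $\calg:=\calg[a_1,\ldots,a_n]$. Setting $b:=m(\calg)$ and $a:=m(\calg[a_2,\ldots,a_n])$, Theorem~\ref{thm1} gives that $b/a=[a_1,\ldots,a_n]$ is already in lowest terms, so $\gcd(a,b)=1$, and $a<b$ since $a_n=2$ forces $b/a>1$. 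Applying Theorem~\ref{thm palsnake} to the palindromification then yields
\[
m=m(\calg_{\leftrightarrow})=b^2+a^2,
\]
the desired decomposition as a sum of two coprime squares.

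Parts~(a) and~(b) are now immediate. Part~(a) holds because by definition $\calg(b/a)=\calg[a_1,\ldots,a_n]=\calg$, whose palindromification is $\calg_{\leftrightarrow}$, the Markov snake graph of $m$. Part~(b) holds because the continued fraction expansion of $b/a$ is $[a_1,\ldots,a_n]$, whose entries are all in $\{1,2\}$.

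The inequality $2a\le b<3a$ translates to $2\le b/a<3$, i.e.\ to conditions on $[a_1,\ldots,a_n]$. The upper bound $b/a<3$ is straightforward: any positive continued fraction with entries in $\{1,2\}$ and $a_n=2$ is bounded above by the limit $[2,1,2,1,\ldots]=1+\sqrt{3}<3$ of the optimally-alternating sequence. The lower bound $b/a\ge 2$ is equivalent to requiring $a_1=2$, i.e.\ that the central pair of the palindromic continued fraction of $m$ is $(2,2)$ rather than $(1,1)$. This central-pair condition is the main obstacle I expect, and I would deduce it from the refined block-length data in Theorem~\ref{thm markov}(3): the first and last runs of $2$s have odd length $2c-1$ while all internal runs of $2$s have even length, so that by the palindromic symmetry the midpoint must land on an internal ``$2,2$'' boundary between two blocks of $1$s. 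Making this slope-by-slope analysis explicit (and if necessary re-indexing via the snake graph automorphisms of Proposition~\ref{propiso}) is where I anticipate the bulk of the technical work.
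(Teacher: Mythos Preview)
Your overall approach matches the paper's proof exactly: invoke Theorem~\ref{thm markov} to obtain the palindromic continued fraction $[a_n,\ldots,a_1,a_1,\ldots,a_n]$, set $b/a=[a_1,\ldots,a_n]$, and apply the sum-of-squares identity (the paper cites Corollary~\ref{cor squares}, which is the numerical shadow of your Theorem~\ref{thm palsnake}). Parts~(a) and~(b) then fall out immediately, and your argument for the upper bound $b<3a$ is fine.

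Where you are more careful than the paper is on the lower bound $2a\le b$. The paper simply writes ``Since $a_1=2$, we see that $2a\le b\le 3a$'' with no further justification; you correctly isolate this as the real content and propose a parity argument based on the run-length data in Theorem~\ref{thm markov}(3). Unfortunately the assertion $a_1=2$ is not true in general. For slope $p/q=1/3$ the Markov continued fraction is $[2,1,1,2]$, so $a_1=1$, giving $b/a=[1,2]=3/2$; since $13=2^2+3^2$ is the \emph{only} coprime decomposition, the inequality $2a\le b$ genuinely fails for $m=13$. The same phenomenon occurs beyond the Fibonacci family: at slope $3/5$ one gets $[2,2,2,1,1,2,2,2]$, hence $a_1=1$, $b/a=[1,2,2,2]=17/12$, and $m=433$. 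In each such case the midpoint of the palindrome lands inside a block of $1$s rather than on an internal ``$2,2$'', so your proposed parity argument cannot go through---not because the idea is wrong, but because the claim it is meant to establish is already false. In short, your proof of (a), (b), $m=a^2+b^2$, and $b<3a$ is complete and identical in spirit to the paper's; the remaining inequality $2a\le b$ is a gap shared by both the paper and your proposal, and is in fact not salvageable as stated.
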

\begin{proof}
 Let $p/q$ be a slope such that $m=m_{p/q}$ and let $[a_n,\ldots,a_1,a_1,\ldots,a_n]$ be the palindromic continued fraction given by Theorem \ref{thm markov}. In particular, the numerator of $[a_n,\ldots,a_1,a_1,\ldots,a_n]$ is $m$, and the snake graph $\calg[a_n,\ldots,a_1,a_1,\ldots,a_n] $ is Markov. Define $a$ and $b$ by $b/a=\cfa$ with $0<a<b$ and $\gcd(a,b)=1$.  Since $a_1=2$, we see that $2a\le b\le 3a$, and $2a=b$ if and only if $m=5$. Then Corollary \ref{cor squares} implies $m=a^2+b^2$, and this proves (a). Part (b) follows directly, since the continued fraction of any Markov snake graph contains only 1s and 2s.
\end{proof}

\begin{remark}
 Part (a) of the corollary implies part (b), because the continued fraction of every Markov snake graph contains only 1s and 2s.
\end{remark}

We conjecture that  the pair $(a,b)$ in Corollary \ref{cor 44} (a) and (b) is uniquely
determined by the Markov number.
\begin{conjecture}
\label{conj 2}
 Let $m>2$ be a Markov number. Then there exist \textbf{unique} positive integers $a<b$ with $\gcd(a,b)=1$ such that $m=a^2+b^2$, $2a \le b < 3a$ and  the palindromification of the snake graph $\calg(b/a)$  is a Markov snake graph. 
 \end{conjecture}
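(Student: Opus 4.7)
The first move is to interpret the conjecture as a statement about slopes rather than about pairs $(a,b)$. Corollary \ref{cor 44} already constructs the pair $(a,b)$ as a function of a choice of slope $p/q$: from the palindromic continued fraction $[a_n,\ldots,a_1,a_1,\ldots,a_n]$ of the Markov snake graph of $p/q$ one sets $b/a=[a_1,\ldots,a_n]$, and Theorem \ref{thm pal2} provides $m=a^2+b^2$. We would first verify that this assignment $p/q\mapsto(a,b)$ is a bijection between the set of reduced slopes $p/q$ with $0<p<q$ and the set of pairs $(a,b)$ allowed by the conjecture. Injectivity is immediate from the injectivity of $p/q\mapsto [a_n,\ldots,a_1,a_1,\ldots,a_n]$ stated at the end of Theorem \ref{thm markov}, since the palindromic continued fraction is recovered from $(a,b)$. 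Surjectivity is built into the hypotheses of the conjecture: any pair $(a,b)$ allowed by it has, by assumption, a palindromification that is a Markov snake graph, and every Markov snake graph arises from a unique slope by the construction of Section \ref{sec::MSG}.

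With this bijection in place, the next step is to observe that it transports the sum $a^2+b^2$ on the pair side to the Markov number $m_{p/q}$ on the slope side, again via Theorem \ref{thm pal2}. Consequently, Conjecture \ref{conj 2} is equivalent to the assertion that distinct slopes produce distinct Markov numbers, that is, to the classical Frobenius uniqueness conjecture for Markov numbers. This is the principal obstacle: any proof of Conjecture \ref{conj 2} along these lines would also settle Frobenius's conjecture, which has remained open since 1913.

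A direct attack on the slope-free formulation would proceed by contradiction. Suppose $(a,b)\ne(a',b')$ both satisfy the conjecture's hypotheses for the same Markov number $m$, so that $m=a^2+b^2=a'^2+b'^2$ with $\gcd(a,b)=\gcd(a',b')=1$ and both $b/a$ and $b'/a'$ lie in $[2,3)$ with continued fractions composed only of $1$s and $2$s. Working in the Gaussian integers, the two decompositions correspond to distinct pairings of the Gaussian prime factors of $m$. The rigid combinatorial structure of item (3) of Theorem \ref{thm markov}, which forces the maximal runs of $1$s to have nearly equal lengths and pins down the distribution of $2$s, imposes strong arithmetic constraints on admissible pairs. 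One would hope that these constraints are restrictive enough that at most one Gaussian decomposition of $m$ can satisfy them. The difficulty, which we expect to be the main stumbling block, is that converting the combinatorial shape of the continued fraction of $b/a$ into arithmetic information about the Gaussian factorization of $m$ appears to require essentially the full strength of the Frobenius conjecture itself.
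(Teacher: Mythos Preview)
Your analysis is correct, and in fact it aligns precisely with what the paper itself does. The statement in question is a \emph{conjecture}, not a theorem: the paper does not prove it. What the paper does prove (Theorem~\ref{thm conj}) is exactly the equivalence you have identified, namely that Conjecture~\ref{conj 2} is equivalent to the Frobenius uniqueness conjecture for Markov numbers. Your argument for this equivalence---the bijection between slopes $p/q$ and admissible pairs $(a,b)$ via the palindromic continued fraction of Theorem~\ref{thm markov}, together with the observation that $a^2+b^2$ is carried to $m_{p/q}$ under Theorem~\ref{thm pal2}---is essentially the paper's own proof of Theorem~\ref{thm conj}(b).

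Your concluding assessment is therefore exactly right: the obstacle is genuine, not a gap in your reasoning. Any proof of Conjecture~\ref{conj 2} would settle the Frobenius conjecture, which has been open since 1913, and the paper makes no claim to have done so. The speculative Gaussian-integer attack you sketch in the final paragraph goes beyond anything the paper attempts; the paper simply records the equivalence and reports a computer verification for slopes $p/q$ with $p<q<70$.
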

The following conjecture is stronger.
\begin{conjecture}
\label{conj 1}
 Let $m>2 $ be a Markov number. Then there exist {\bf unique} positive integers $a<b$ with $\gcd(a,b)=1$ such that $m=a^2+b^2$, $2a \le b < 3a$ and the continued fraction expansion of the quotient $b/a$ contains only $1s$ and $2s$. 
\end{conjecture}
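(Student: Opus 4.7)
Existence is immediate from Corollary~\ref{cor 44}, which produces a pair $(a,b)$ satisfying all the hypotheses (and even the stronger condition that the palindromification of $\calg(b/a)$ is a Markov snake graph). So the substance of the conjecture is uniqueness, and the plan is to argue that two admissible pairs must coincide.

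To set up the argument, suppose $(a,b)$ and $(a',b')$ are both admissible, and write $b/a = [a_1,\ldots,a_n]$ and $b'/a' = [a'_1,\ldots,a'_{n'}]$ in canonical form ($a_n, a'_{n'} \ge 2$). The constraint $2a \le b < 3a$ forces $a_1 = a'_1 = 2$, and by assumption every entry lies in $\{1,2\}$. By Theorem~\ref{thm palsnake}, both palindromifications $\calg(b/a)_{\leftrightarrow}$ and $\calg(b'/a')_{\leftrightarrow}$ have exactly $m$ perfect matchings, and by Theorem~\ref{thm pal1} they are precisely palindromic snake graphs of even length. The goal is to prove these two snake graphs are isomorphic; once that is achieved, the injectivity statement in Theorem~\ref{thm markov} recovers a common slope, and hence a common continued fraction, giving $(a,b) = (a',b')$.

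The approach I would take splits this goal into two stages. The first stage is a rigidity claim: any palindromic snake graph of even length whose continued fraction is a word in $\{1,2\}^{\ast}$ starting with $a_1 = 2$, and whose number of perfect matchings happens to be a Markov number, must in fact be a Markov snake graph in the sense of Section~\ref{sec::MSG}. I would attack this by reading off a candidate slope $p/q$ from the counts of $1$-runs and $2$-runs, then using the grafting and multiplication identities for snake graphs from \cite{CS4,CS2} to expand the matching count recursively along the Stern--Brocot tree and compare it against the Markov recursion $m_i m_i' = m_j^2 + m_k^2$, forcing the precise run-length constraints of Theorem~\ref{thm markov}. The second stage is uniqueness inside the Markov family: distinct Markov snake graphs with the same matching count would correspond to distinct slopes producing the same Markov number, which is exactly the classical Frobenius uniqueness conjecture.

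The hard part, and the main obstacle, is therefore that Conjecture~\ref{conj 1} sits at roughly the same level of difficulty as the Frobenius conjecture. A natural way to try to sidestep Frobenius is to work directly in the snake-graph ring of \cite{CS,CS2}: from two genuinely distinct admissible pairs $(a,b)$ and $(a',b')$ for a single $m$, one would hope to derive a snake-graph identity that, after specialization, is incompatible with the Markov exchange relation $x^2 + y^2 + z^2 = 3xyz$. Making this final step rigorous, without having to prove the Frobenius uniqueness conjecture itself, appears to be the fundamental combinatorial difficulty in the whole program.
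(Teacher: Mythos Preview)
The statement you are addressing is labeled \emph{Conjecture} in the paper, and the paper does not prove it. On the contrary, Theorem~\ref{thm conj} shows that Conjecture~\ref{conj 1} implies Conjecture~\ref{conj 2}, and that Conjecture~\ref{conj 2} is equivalent to the Frobenius uniqueness conjecture for Markov numbers. Thus Conjecture~\ref{conj 1} is, as far as the paper establishes, at least as strong as an open problem that has resisted proof for over a century. Your own write-up essentially reaches the same conclusion in its last two paragraphs, so as a diagnosis it is accurate; but as a \emph{proof} it is not one, and you should not present it as such.

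There is also a genuine gap in your two-stage plan beyond the reliance on Frobenius. Your ``rigidity'' claim in stage one --- that any even-length palindromic continued fraction in the alphabet $\{1,2\}$ with $a_1=2$ whose numerator equals some Markov number must already satisfy the run-length constraints of Theorem~\ref{thm markov} and hence be a Markov snake graph --- is precisely the content by which Conjecture~\ref{conj 1} exceeds Conjecture~\ref{conj 2}. The paper only proves the one-way implication \ref{conj 1}$\Rightarrow$\ref{conj 2}; it does not claim the converse, and your sketch of ``reading off a candidate slope from the run counts and comparing against the Markov recursion'' does not rule out non-Markov $\{1,2\}$-palindromes that accidentally hit a Markov value. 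So even if Frobenius were granted, your stage one would still be an unproved assertion. In short: the paper offers no proof to compare against, your proposal correctly locates the obstruction at (and beyond) Frobenius uniqueness, and the honest status of the statement remains conjectural.
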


\begin{remark}
 We have checked these conjectures by computer for all Markov numbers of slope $p/q$ with $p<q<70$. This is a total of 1493 Markov numbers, the largest of which is \[56790444570379838361685067712119508786523129590198509.\] This number is larger than $5.679*10^{52}$.
\end{remark}

\begin{thm}
 \label{thm conj}
(a) Conjecture \ref{conj 1} implies Conjecture \ref{conj 2}.

(b) Conjecture \ref{conj 2} is equivalent to the Uniqueness Conjecture for Markov numbers.
\end{thm}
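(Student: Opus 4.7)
For part (a), the idea is simple: Conjecture \ref{conj 2}'s hypotheses on $(a,b)$ are strictly stronger than those of Conjecture \ref{conj 1}. Indeed, if the palindromification of $\calg(b/a)$ is a Markov snake graph, then by the Remark following Corollary \ref{cor 44} (equivalently by clause (1) of Theorem \ref{thm markov}) the continued fraction expansion of $b/a$ automatically contains only $1$s and $2$s, so $(a,b)$ satisfies Conjecture \ref{conj 1}'s hypotheses as well. Existence of at least one such pair is guaranteed by Corollary \ref{cor 44}(a). Therefore uniqueness in Conjecture \ref{conj 1} forces uniqueness in (the a priori possibly smaller set of pairs of) Conjecture \ref{conj 2}, which is what we want.

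For part (b), my plan is to set up, for each Markov number $m>2$, a bijection between the set $S_m$ of slopes $p/q\in(0,1)$ with $\gcd(p,q)=1$ and $m_{p/q}=m$, and the set $T_m$ of pairs $(a,b)$ satisfying the conditions of Conjecture \ref{conj 2} for $m$. The forward map $S_m\to T_m$ is furnished by Corollary \ref{cor 44}: given $p/q$, extract the palindromic continued fraction $[a_n,\ldots,a_1,a_1,\ldots,a_n]$ of Theorem \ref{thm markov}, set $b/a:=[a_1,\ldots,a_n]$ in lowest terms, and observe that $\gcd(a,b)=1$, the identity $m=a^2+b^2$ (via Theorem \ref{thm pal2}), the inequality $2a\le b<3a$ (from $a_1=2$), and the hypothesis that the palindromification of $\calg(b/a)$ is Markov are all immediate. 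For the reverse map $T_m\to S_m$, given a valid pair $(a,b)$ one sends it to the unique slope whose Markov snake graph, as produced in Section \ref{sec::MSG}, equals the palindromification of $\calg(b/a)$; this slope is well-defined because the assignment in Section \ref{sec::MSG} is injective, which is precisely the injectivity clause at the end of Theorem \ref{thm markov}.

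Once this bijection is in place, the equivalence of Conjecture \ref{conj 2} with the Uniqueness Conjecture is immediate: Conjecture \ref{conj 2} is the assertion $|T_m|=1$ for every Markov number $m>2$, while the slope formulation of the Uniqueness Conjecture is $|S_m|=1$ for every such $m$, and these statements coincide under the bijection. The main obstacle, and the step I would spend the most care on, is verifying cleanly that slope $\mapsto$ Markov snake graph is really a bijection (so that the reverse map is well-defined and $T_m \to S_m$ lands in $S_m$): the injectivity follows from Theorem \ref{thm markov}, but one should spell out that the crossing data $(v_1,\ldots,v_p)$ of Section \ref{sec::MSG}, together with the length of the palindrome, reconstructs $p$ and $q$ uniquely. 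One should also confirm that no Markov number $m>2$ is lost in passing from the classical triple-based Uniqueness Conjecture to the slope formulation, i.e., that every Markov number $m>2$ indeed appears as some $m_{p/q}$ with $p/q\in(0,1)$; this is implicit in the Markov tree parameterization recalled in Section \ref{sec::MSG} but deserves an explicit remark.
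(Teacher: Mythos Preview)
Your proposal is correct and follows essentially the same approach as the paper: part (a) is identical, and for part (b) you use the same two maps (slope $\mapsto (a,b)$ via Corollary~\ref{cor 44}, and $(a,b)\mapsto$ slope via the injectivity clause of Theorem~\ref{thm markov}) that the paper uses, with the only difference that you package them as a single bijection $S_m\leftrightarrow T_m$ whereas the paper argues the two implications separately. The caveats you flag (checking that the slope-to-snake-graph map is injective and that every Markov number $m>2$ arises as some $m_{p/q}$) are exactly the points the paper relies on implicitly.
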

 
\begin{proof} (a) The existence of the pair $(a,b)$ in both conjectures follows from Corollary \ref{cor 44}. 
Thus we need to show that the uniqueness in Conjecture \ref{conj 1} implies the uniqueness in Conjecture \ref{conj 2}.
Since every Markov snake graph has a continued fraction that contains only 1s and 2s, the condition in Conjecture \ref{conj 1} is weaker than the condition in Conjecture \ref{conj 2}. If the pair  $(a,b)$ is uniquely determined by the weaker condition then it is also uniquely determined by the stronger condition. This proves (a).

(b) First assume that Conjecture \ref{conj 2} holds.
 Let $m$ be a Markov number. For $m=1,2,$ the uniqueness conjecture is known, so we may assume  $m\ge 3$. We want to show that $m$ is the maximum of a unique Markov triple, or equivalently, that there is a unique slope $p/q$ with $0<p<q$ and $\gcd(p,q)=1$ such that $m=m_{p/q}$.

 For every slope $p/q$, the proof of Corollary \ref{cor 44} constructs a pair $(a,b)$  satisfying the conditions in the corollary, in particular $m_{p/q}=a^2+b^2$.

Suppose that there are two slopes $p/q$ and $p'/q'$ such that the corresponding Markov numbers $m_{p/q}$ and $m_{p'/q'}$ are equal. Denote by $[a_n,\ldots,a_2, a_1,a_1, a_2,\ldots,a_n]$ and $[a'_n,\ldots,a'_2,a'_1,a'_1,a'_2,\ldots,a'_{n'}]$ the corresponding continued fractions that are given by Theorem \ref{thm markov}. Define $a,b,a',b'$ by $b/a=\cfa$ and $b'/a'=[a_1',a_2',\ldots,a_{n'}']$. Then the two pairs $(a,b) $ and $(a',b')$ both satisfy the condition in Corollary \ref{cor 44}.
Our assumption that  Conjecture \ref{conj 1} holds implies that $(a,b)=(a',b')$. Therefore $\cfa=[a_1',a_2',\ldots,a_{n'}']$ and hence $p/q=p'/q'$, by Theorem \ref{thm markov}. Thus the uniqueness conjecture holds.

Conversely, let us now  assume that the uniqueness conjecture for Markov numbers  holds. Let $m\ge 3$ be a Markov number. We want to show that the pair $(a,b)$ of Corollary \ref{cor 44}  is unique. Assume there is another pair $(c,d)$ that satisfies the conditions of the corollary. Then $m=a^2+b^2=c^2+d^2$ and $m$ is the number of perfect matchings of the palindromifications of both $\calg(b/a)$ and $\calg(d/c)$. Since both palindromifications are Markov snake graphs, both define a slope $p/q$ and $r/s$ such that $m=m_{p/q}=m_{r/s}$. Our assumption that the uniqueness conjecture holds implies that $p/q=r/s$, and therefore the snake graphs $\calg(b/a)$ and $\calg(d/c)$ are equal. It follows that $(a,b)=(c,d)$.
\end{proof}

\subsection{Markov snake graphs in terms of Christoffel words}\label{sect: Christoffel}
We give another construction for the Markov snake graph. The line with slope $p/q$ defines a  lattice path, the {\em lower Christoffel path}, which is the lattice path from $(0,0)$ to $(q,p)$ that satisfies the conditions:
\begin{itemize}
\item[(a)]
The path lies below the line segment from $(0,0)$ to $(q,p)$.
\item[(b)] The region enclosed by the path and the line segment contains no lattice point besides those on the path.
\end{itemize}
We give an example in the upper left picture in Figure \ref{linepqch}, where the line segment is drawn in blue and  the lower Christoffel path in red.  

The Christoffel word of slope $p/q$ is obtained from the Christoffel path by writing the letter $x$ for each horizontal step, and writing the letter $y$ for each vertical step. In the example of Figure \ref{linepqch}, the Christoffel word is $xxxyxxyxxy$. We refer the reader to \cite{BLRS} for  further results on  Christoffel words.

To obtain the Markov snake  graph we use tiles of side length 1/2 and place them along the Christoffel path such that the horizontal steps of the Christoffel path become the south boundary of the snake graph and the vertical steps of the Christoffel path become the east boundary of the snake graph. Moreover, we leave the first and the last half step of the Christoffel path empty, see the upper right picture in  Figure~\ref{linepqch}. 
\begin{figure}
\begin{center}
\scalebox{1}{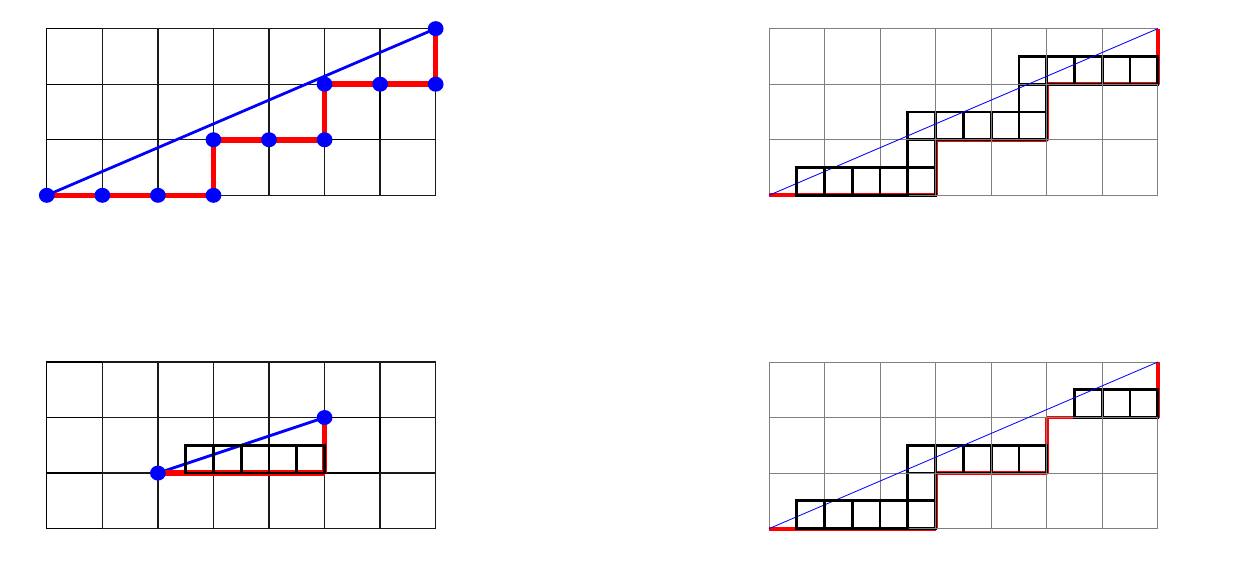}
\caption{The line with slope $p/q=3/7$ with its lower Christoffel path  in red (top left), defining the Christoffel word $xxxyxxyxxy.$ The corresponding snake graph (top right) is obtained by placing tiles of side length 1/2 on the Christoffel path leaving the first half step and the last half step empty.  Its continued fraction is $[2,1,1,2,2,1,1,2,2,1,1,2]=2897/1120$. The bottom right picture shows the two unique snake graphs that complete the Markov triple. Their continued fractions are $[2,1,1,2,2,1,1,2]=194/75 $ and $[2,2]=5/2$. The bottom left picture shows the Markov graph corresponding to the mutation.} 
\label{linepqch}
\end{center}
\end{figure}

Frobenius' uniqueness conjecture is equivalent to the conjecture that no two  Markov snake graphs have the same number of perfect matchings. It is known that every Markov snake graph $\calg$ determines a unique pair of Markov snake graphs $\calg',\calg''$ such that the three graphs form a Markov triple in which $\calg $ is the largest graph. In fact $\calg'$ and $\calg''$ are subgraphs of $\calg$.

The description of the Markov snake graph in terms of the Christoffel path is useful to determine the two smaller Markov snake graphs $\calg'$ and $\calg''$ from $\calg$. The Christoffel path decomposes in a unique way as a concatenation of two Christoffel paths at the lattice point $L$ that is closest to the diagonal, see \cite{BLRS}. In our example, this point is the point $(5,2)$ and the Christoffel word factors as follows $(xxxyxxy)(xxy).$ 
The Markov snake graphs $\calg'$ and $\calg''$ are the graphs of these shorter Christoffel paths.

 We obtain $\calg'$ and $\calg''$ from the original Markov snake graph $\calg$ simply by removing the 3 tiles that are incident to the lattice point $L$, see the bottom right picture in Figure \ref{linepqch}. In that example, the Markov triple is $(2897,194,5)$.

The mutation of the Markov triple $(m_1,m_2,m_3) \to (m_1',m_2,m_3)$ is given by the formula 
$m_1m_1'=m_2^2+m_3^2$. 
The Markov snake graph $\calg_1'$ of $m_1'$ is also easily obtained from our picture. Let $L,L'$ be the lattice points that are closest to the diagonal from below and above, respectively. In our example, we have $L=(5,2)$ and $L'=(7,3)-(5,2)=(2,1)$, see the bottom left picture in Figure \ref{linepqch}. Then the Markov snake graph $\calg_1'$ is the one determined by the line segment between $L'$ and $L$. In our example $\calg'$ is a straight snake graph with 5 tiles. It has 13 perfect matchings, confirming the mutation formula $13= 3\cdot 194\cdot 5-2897= (194^2+5^2)/2897.$

 \subsection{Markov band graphs} Let $m$ be a Markov number and $\calg(m)=\calg_\zg$ its snake graph, where $\zg$ is the  corresponding arc in the torus with one puncture. This arc starts and ends at the puncture. Moving its endpoints infinitesimally away from the puncture but keeping them together, we obtain a closed loop $\zeta$. In other words, $\zeta $ is running parallel to $\zg$ except in a small neighborhood of the puncture, where $\zeta$ goes halfway around the puncture while $\zg$ goes directly to the puncture. There are precisely two ways of doing this, namely passing the puncture on the left or on the right. Both  cases are illustrated in Figure \ref{figzeta}. In both cases, going halfway around the puncture creates three additional crossings with the triangulation. Notice that the two pictures with the curves $\zg$ and $\zeta$ are rotationally symmetric. Therefore both cases are essentially the same. We will also verify this now on the level of band graphs.

The band graph $\band_\zeta$ of $\zeta$ has exactly 3 more tiles than the snake graph $\calg_\zg$ of $\zg$. In Figure \ref{figzeta}, we show the last 3 tiles of $\calg_\zg$ in gray and the 3 new tiles of $\band_\zeta$ in white. The black dots indicate that these vertices (respectively  the edge between them) are identified with the two southern vertices (respectively the edge between them) of the first tile of $\calg_\zg$ to form the band graph $\band_\zeta$.
 
Consider the horizontal segments in $\band_\zeta$.
In the first case, the last horizontal segment of $\calg_\zg$ is extended by two tiles and all other horizontal segments of $\band_\zeta $ are of the same length as the horizontal segments of $\calg_\gamma$. In the second case, it is the first horizontal segment of $\calg_\zg$ that is extended by two tiles and all other horizontal segments of $\band_\zeta $ are of the same length as the horizontal segments of $\calg_\gamma$. However, since $\calg_\gamma$ is a palindromic snake graph, we see that the result in both cases is the same, and thus the band graph $\band_\zeta$ is uniquely determined by $\calg_\zg$.

\begin{figure}
 
\begin{center}
\scalebox{1.2}{ 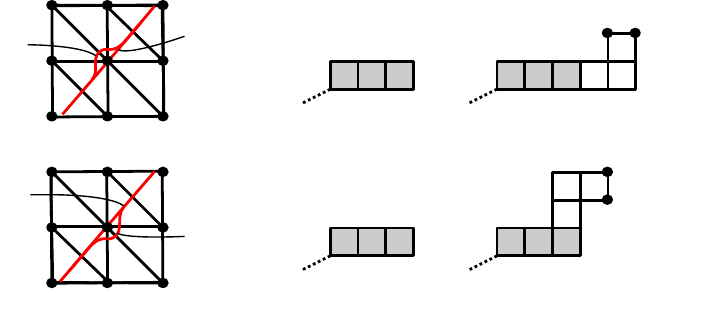}
 \caption{Construction of a Markov band graph. On the left, the band $\zeta$ is obtained from the arc $\zg$ by avoiding the puncture. The pictures on the right show the difference between the snake graph $\calg_\zg$ and the band graph $\band_\zeta$. The band graph has 3 additional  tiles.}\label{figzeta}
\end{center}
\end{figure}

In this way we have associated a band graph $\band_\zeta$ to every Markov number $m$. We shall often use the notation  $\band(m)=\band_\zeta$ in order to emphasize this relation.

\begin{thm}\label{thm band}
 The number of perfect matchings of $\band (m)$ is  $3m$.
\end{thm}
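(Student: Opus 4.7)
The plan is to reduce $m(\band(m))$ to the Markov equation $m^2+m_1^2+m_2^2 = 3mm_1m_2$ via an identity in the snake graph ring, rather than to try to count matchings of $\band(m)$ directly. Let $(m,m_1,m_2)$ be the Markov triple with $m$ the largest entry, and let $\calg = \calg(m),\ \calg_1=\calg(m_1),\ \calg_2=\calg(m_2)$ be the corresponding Markov snake graphs, so that $m(\calg)=m$ and $m(\calg_i)=m_i$ by Theorem~\ref{thm1}. By Section~\ref{sec::MSG}, $\calg_1$ and $\calg_2$ are the two sub-snake graphs of $\calg$ obtained by removing the three tiles at the lattice point $L$ of the Christoffel path closest to the diagonal; on the other hand $\band(m)$ has three additional tiles compared with $\calg$, with a pair of boundary edges identified to close the band.

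The central step I would carry out is to establish the identity
\[
\band(m)\cdot\calg_1\cdot\calg_2 \;=\; \calg\,\calg \;\sqcup\; \calg_1\,\calg_1 \;\sqcup\; \calg_2\,\calg_2
\]
in the snake graph ring of \cite{CS,CS2,CS3}, where juxtaposition denotes the graph product (for which the number of matchings is multiplicative) and $\sqcup$ denotes disjoint union of graphs (for which it is additive). Geometrically this identity reflects the skein relation in the once-punctured torus: the closed loop $\zeta$ defining $\band(m)$ meets the arcs $\gamma_1\cup\gamma_2$ in exactly three points clustered near the puncture, corresponding precisely to the three additional tiles of $\band(m)$ from Section~\ref{sec::MSG}; resolving these three crossings in turn produces the three squared terms on the right hand side. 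Combinatorially, the same identity can be derived by iterated application of the grafting and resolution formulas developed in \cite{CS2,CS4}, exploiting the rotational symmetry of $\calg$ at its center tile (Theorem~\ref{thm pal1}) and the palindromification formula (Theorem~\ref{thm palsnake}) to split $\calg$ into two half-palindromes and stitch the identifications together.

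Once this identity is in hand, taking the number of perfect matchings on both sides and applying Theorem~\ref{thm1} gives
\[
m(\band(m))\cdot m_1\cdot m_2 \;=\; m^2 + m_1^2 + m_2^2,
\]
and the Markov equation $m^2+m_1^2+m_2^2 = 3mm_1m_2$ then immediately yields $m(\band(m)) = 3m$, completing the proof. One can verify numerically that the identity produces the correct answer in small cases: for $m=5$ with triple $(1,2,5)$ it reads $m(\band(5))\cdot 2\cdot 1 = 25+4+1=30$, hence $m(\band(5))=15=3\cdot 5$; for $m=13$ with triple $(1,5,13)$ it reads $m(\band(13))\cdot 5\cdot 1 = 169+25+1=195$, hence $m(\band(13))=39=3\cdot 13$.

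The main obstacle is the identity in the snake graph ring itself. The delicate point is that the three crossings of $\zeta$ with $\gamma_1\cup\gamma_2$ are clustered near the puncture, so genuine care is needed in applying skein resolutions for a closed loop meeting arcs that share a common puncture endpoint (tagging subtleties appear). A purely combinatorial proof via grafting sidesteps this issue but requires several coordinated applications of the resolution formulas of \cite{CS2,CS4}, organized using the palindromic structure of $\calg$ and the way in which $\calg_1$ and $\calg_2$ sit inside $\calg$; verifying that these resolutions combine to produce \emph{exactly} the three squared terms, with no leftover contributions, is the technical heart of the argument.
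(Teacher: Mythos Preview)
Your approach is genuinely different from the paper's, and considerably more indirect. The paper does not use the Markov equation or the other members $m_1,m_2$ of the triple at all. Instead it applies two snake graph identities directly to $\band(m)$. Writing $\calg^+$ for the band cut open (so $\calg^+$ has $d+3$ tiles, the first $d$ being $\calg_\gamma$), the self-grafting formula of \cite[Section~3.4]{CS2} gives $\calg^+ = \band(m) + \calg_-$ up to single edges, where $\calg_-$ is $\calg_\gamma$ with its first two tiles removed. The grafting-with-single-edge formula of \cite[Section~3.3]{CS2} gives $\calg^+ = \calg_\gamma\,\calg' + \calg^-$, where $\calg'$ is the two-tile straight piece at the end of $\calg^+$ (so $m(\calg')=3$) and $\calg^-$ is $\calg_\gamma$ with its \emph{last} two tiles removed. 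Rotational symmetry of the Markov snake graph gives $\calg_-\cong\calg^-$, whence $\band(m) = \calg_\gamma\,\calg'$ and $m(\band(m))=3m$.

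Your proposed identity $\band(m)\cdot\calg_1\cdot\calg_2 = \calg^2 \sqcup \calg_1^2 \sqcup \calg_2^2$ is an attractive repackaging of the Markov equation, and your numerical checks are consistent with it. But as you yourself flag, this identity is precisely the step you have not proved. The skein-resolution sketch runs into exactly the tagging subtleties near the puncture that you mention, and the alternative combinatorial route via iterated grafting is left entirely unspecified; appeals to Theorems~\ref{thm pal1} and~\ref{thm palsnake} do not obviously produce the three squared terms you need. So as it stands there is a real gap at the heart of the argument. The paper's route avoids all of this: it is a two-step computation using only the internal structure of the single snake graph $\calg_\gamma$, with the factor of $3$ coming from the three perfect matchings of a two-tile piece rather than from the Markov equation.
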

\begin{proof} The proof is a relatively simple computation with snake graphs. In Figure~\ref{example}, we show this computation in the case where $m=5$, omitting single edge snake graphs since they have exactly one perfect matching.  Let $d$ be the number of tiles in the Markov snake graph $\calg_\zg$. 
Denote by $\calg^+$  the snake graph obtained from $\band(m)$ by cutting along the glueing edge. Thus $\calg^+$ has $d+3$ tiles and its initial $d$ tiles form the Markov snake graph $\calg_\zg$. Let $e $ be the first interior edge in $\calg^+$ that has the same sign as the glueing edge, and let $e' $ be the last  interior edge in $\calg^+$ that has the same sign as the glueing edge. 
The self-grafting formula \cite[Section 3.4]{CS2} describes a relation between the band graph $\band(m)$ and its cut $\calg^+$ in the snake ring. It says that $\calg^+ =\band(m) \cdot (\textup{glueing\ edge}) + \calg_-$, where $\calg_-$ is the snake graph obtained from $\calg^+$ by removing the tiles that precede the interior edge $e$ and also  removing the tiles that succeed the interior edge $e'$. Since our snake graph $\calg^+$ is constructed from the Markov snake graph, we know exactly which tiles to remove, namely,  $\calg_-$ is obtained from $\calg^+$ by removing 
the first 2 tiles and the last 3 tiles.  Equivalently, $\calg_-$ is obtained from the Markov snake graph $\calg_\zg$ by removing the first two tiles.

On the other hand, using the formula for grafting with a single edge from \cite[Section 3.3 case 3]{CS2}, we also see that $\calg^+  \cdot (\textup{single\ edge}) = \calg_\zg \calg' +\calg^- \cdot (\textup{single\ edge}) $, where $\calg'$ is the snake graph consisting of the last two tiles of $\calg^+$ and $\calg^-$ is the snake graph obtained from the Markov snake graph $\calg_{\zg}$ by removing the last two tiles. In particular,  the two snake graphs $\calg_-$ and $\calg^-$  are isomorphic, since the Markov snake graph is rotationally symmetric.

Putting these  results together, we see that up to multiplying by single edges we have the following identity in the snake ring 
\[\band(m) =\calg^+ -  \calg_- =  \calg_\zg \calg' +\calg^- -  \calg_- =\calg_\zg \calg'.\]
Now the result follows since the number of perfect matchings of $\calg_\zg$ is $m$ and the number of perfect matchings of $\calg'$ is 3.
\end{proof}

\begin{remark}
 This result has an interesting interpretation in number theory, because the triple $(m_1,m_2,m_3)$ is a solution of the Markov equation $x^2+y^2+z^2=3xyz$ if and only if  the  triple $(3m_1,3m_2,3m_3)$ is  a solution of the equation $x^2+y^2+z^2=xyz$. On the other hand, the Diophantine equation $x^2+y^2+z^2=k\,xyz$ has a positive integer solution if and only if $k=1$ or $k=3$.  Geometrically, it was known that the solution  $(m_1,m_2,m_3)$ corresponds to a triangulation $T$ of the torus with one puncture. And the theorem above shows that the solution $(3m_1,3m_2,3m_3)$  corresponds to the three closed loops obtained by moving the arcs of $T$ infinitesimally away from the puncture.
\end{remark}

\begin{figure}
\begin{center}
\large\scalebox{0.7}{ 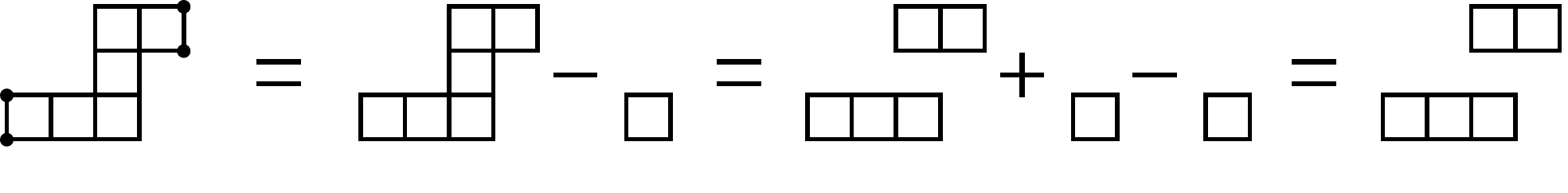}
 \caption{An example illustrating the proof of Theorem \ref{thm band}.}\label{example}
\end{center}
\end{figure}

\end{document}